\newcommand{\rat}{{\operatorname{rat}}}
\newcommand{\Hom}{\operatorname{Hom}}
\newcommand{\End}{\operatorname{End}}
\renewcommand{\Im}{\operatorname{Im}}
\newcommand{\C}{\mathbf{C}}
\renewcommand{\L}{\mathbf{L}}
\renewcommand{\P}{\mathbf{P}}
\newcommand{\Q}{\mathbf{Q}}
\newcommand{\Z}{\mathbf{Z}}
\newcommand{\un}{\mathbf{1}}
 \newcommand{\sT}{\mathcal{T}}
 \newcommand{\sC}{\mathcal{C}}
\newcommand{\sA}{\mathcal{A}}
\newcommand{\sM}{\mathcal{M}}
\newcommand{\sO}{\mathcal{O}}
\newcommand{\sH}{\mathcal{H}}
\newcommand{\sJ}{\mathcal{J}}
\newcommand{\sS}{\mathcal{S}}
 \newcommand{\sF}{\mathcal{F}}
 \numberwithin{equation}{section}
\theoremstyle{plain}
\newtheorem{thm}[equation]{Theorem}
\newtheorem{prop}[equation]{Proposition}
\newtheorem{lm}[equation]{Lemma}
\newtheorem{cor}[equation]{Corollary}
\newtheorem{conj}[equation]{Conjecture}
\theoremstyle{definition}
\newtheorem{ex}[equation]{Example}
\newtheorem{rk}[equation]{Remark}
\begin{document}

 \title[\ ] 
{On the  rationality and the finite dimensionality of a cubic fourfold}

  \author   { CLAUDIO PEDRINI}  \bigskip

 \maketitle
 
 \begin{abstract} Let $X$ be a cubic fourfold in $\P^5_{\C}$. We prove  that, assuming the Hodge conjecture for the product $S \times S$, where $S$ is a complex surface, and the finite dimensionality of  the Chow motive $h(S)$, there are at most a countable number  of  decomposable integral polarized Hodge structures, arising from the fibers of a family $f : \sS \to B$ of smooth projective surfaces. According to the results in [ABB] this is related to a conjecture proving the irrationality of a very general $X$. If $X$ is special, in the sense of B.Hasset,  and $F(X) \simeq S^{[2]}$, with $S$ a K3 surface associated to $X$, then we show that 
the Chow motive $h(X)$ contains as a direct summand a "transcendental motive" $t(X)$ such that $t(X)\simeq t_2(S)(1)$. The motive of $X$ is finite dimensional if and only if $S$ has a finite dimensional motive, in which case $t(X)$ is indecomposable. Similarly, if $X$ is very general and the motive $h(X)$ is finite dimensional, then $t(X)$ is indecomposable.\end{abstract}

\section {Introduction} 

Let  $\sM_{rat}(\C)$  be the   (covariant) category  of  Chow motives (with $\Q$-coefficients), whose objects are of the form $(X,p,n)$, where $X$ is a smooth projective variety over $\C$ of dimension $d$, $p$ is an idempotent in the ring $A^d(X \times X)= CH^d(X \times X)\otimes \Q $ and $n \in \Z$.  If $X$ and $Y$ are smooth  projective varieties  over $\C$, then the morphisms  $\Hom_{\sM_{rat}}(h(X),h(Y)) $ of their motives $h(X)$ and $h(Y)$ are given by correspondences in the Chow groups $A^*( X \times Y) =CH^*(X\times Y) \otimes \Q$. A similar definition holds for the category $\sM_{hom}(\C)$ of homological motives.The finite dimensionality of $h(X)$, as conjectured by S.Kimura, is known to hold  for curves, rational surfaces, surfaces with $p_g(X)=0$, which are not of general type and some 3-folds. In particular if $X$ is a cubic 3-fold in $\P^4_{\C}$,  then its motive is finite dimensional and  of abelian type, i.e. it lies in  the subcategory of $\sM_{rat}(\C)$ generated by the motives of abelian varieties, see[GG].\par
 If  $ d= \dim X \le 3 $,  then the finite dimensionality of $h(X)$ is a birational invariant ( see[GG, Lemma 7.1]), the reason being that in order  to make regular a birational map $\P^3 \to X$ one needs to blow up only  points and curves, whose motives are finite dimensional. Therefore every rational 3-fold has a finite dimensional motive.\par
 \noindent In the case   $ d= \dim X =  4$ the situation looks different. The finite dimensionality of $h(X)$ is not a priori
a birational invariant. In fact if   $r :  \P^4 \to X $ is a birational map then, by Hironaka theorem,  there is a composition
\begin{equation}  \tau = \tau_n \circ \tau_{n-1}\circ\cdots \circ \tau_1 :  \tilde X \to \P^4 \end{equation}
of monoidal   transformations $\tau_i : \tilde X_i \to \tilde X_{i-1}$ with smooth centers $C_{i-1}$ of dimensions $\le 2$, where $\tilde X_0 =\P^4$  and $\tilde X_n =\tilde X$,  such that\par
\noindent  $f =r\circ \tau :\tilde X \to X$ is a birational morphism. Therefore, by Manin's formula, we get 
\begin{equation} h(\tilde X) \simeq h(\P^4) \oplus \bigoplus_{ 1\le j \le n-1} h(C_j)(j)\end{equation}
and hence $h(\tilde X)$ is finite dimensional if all the motives $h(C_j)$ are finite dimensional, in which case also $h(X)$ is finite dimensional. Since the motives of points  and curves are finite dimensional the problem  is related to the finite dimensionality of those $C_j$ such that $dim \  C_j=2$.\par
Let $X$ be a cubic fourfold in $ \P^5_{\C}$. All known examples are rational while the very general one is conjecturally irrational. These examples of rational cubic fourfolds have an associated K3 surface.\par
The aim of this note is to show how  the rationality of  a  cubic fourfold and the finite dimensionality of its motive can be related to properties of  the motives of surfaces.\par
\noindent  In Sect.2 we recall  some  results by Y. Zarhin and  V.S. Kulikov on cubic fourfolds.  Zarhin in [Za] (see Theorem 2.1)  showed that there is  a restriction  on the class of surfaces that can appear by  resolving the indeterminacy of a rational map $r:  \P^4 \to X$, as in (1.1).  In particular, if  one assumes that a very general cubic fourfold $X$ is  rational , then among the surfaces $C_j$ appearing in (1.2) there must be surfaces $S$ that are neither rational nor K3, see Remark 2.2. Kulikov in [Ku]  (see Theorem 2.3 ) showed that a very general cubic fourfold in $\P^5_{\C}$  is not rational, if one assumes that the  polarized Hodge structure $\sT_S$ on the transcendental cycles of a smooth projective surface is integrally indecomposable. However  the above conjecture cannot be true because  the polarized Hodge structure of a Fermat sextic $S\subset \P^3_{\C}$ is  integrally decomposable, see [ABB].\par
\noindent In [ABB, Sect.4]  it is also  proved that the irrationality of a very general cubic fourfold $X$ will follow, if one can prove that there are only countably many integral polarized Hodge structures, arising from the transcendental lattice of a surface, which are decomposable. There  are countably many families $\sS_i \to B_i$ of complex surfaces, over  an irreducible base variety, such that every isomorphism class of a projective surface is represented  by a fiber of such a family. Therefore the irrationality of a very general cubic fourfold will follow if one could prove the following conjecture:\par
\begin {conj} Let $f  : \sS \to B$ be a smooth projective family of  complex surfaces over an irreducible base. There  are at most  countably many integral polarized Hodge structures $\sT_{S_b}$, arising from the fibers $S_b$ of $f$,  which are decomposable. \end{conj}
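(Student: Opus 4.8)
Throughout, the plan is the conditional one of the abstract: I assume the Hodge conjecture for the self-products $S\times S$ and the finite dimensionality of the Chow motive $h(S)$ for every complex projective surface $S$. The strategy splits in two: first recast the fibrewise condition ``$\sT_{S_b}$ is decomposable'' in motivic terms, then control the locus of such $b$ in $B$ by a Noether--Lefschetz type argument. Recall that a polarized integral Hodge structure $\sT$ is decomposable exactly when $\End_{\Z}(\sT)$, the ring of endomorphisms preserving both the lattice and the Hodge filtration, contains an idempotent $\ne 0, 1$; since the category of polarizable Hodge structures is semisimple such an idempotent may be taken self-adjoint for the polarization, giving an orthogonal splitting $\sT = e\sT \perp (1-e)\sT$ into non-zero sub-Hodge structures.

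I would first translate the fibre condition. Fix $b\in B$ and write $S = S_b$. By the Hodge conjecture for $S\times S$ the Betti realization identifies $\End_{\sM_{hom}}(t_2(S))$ with $\End_{\Q}(\sT_S\otimes\Q)$, so that every idempotent of this (semisimple) $\Q$-algebra is induced by an algebraic self-correspondence of $S$; by finite dimensionality of $h(S)$ the direct summand $t_2(S)$ is evenly finite dimensional, so Kimura's nilpotence theorem gives a surjection $\End_{\sM_{rat}}(t_2(S)) \to \End_{\sM_{hom}}(t_2(S))$ with nilpotent kernel, along which idempotents lift. Hence $\sT_S\otimes\Q$ is a decomposable rational Hodge structure if and only if $t_2(S)$ is a decomposable Chow motive. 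The \emph{integral} refinement keeps track of the lattice: an integral orthogonal splitting of $\sT_S$ corresponds to a Chow idempotent of $t_2(S)$ whose realization stabilizes $\sT_S$ inside $\sT_S\otimes\Q$ and is self-adjoint --- a strictly stronger demand than rational decomposability. (This is why the indecomposability asserted by Kulikov holds, for instance, for every $K3$ surface, where $\sT_S$ has $h^{2,0}=1$ so that no rational splitting exists at all, while still failing for the Fermat sextic.)

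Next I would spread the condition out over $B$. Let $\sV\subset R^2f_*\Z$ be the local system of transcendental lattices, carrying a polarized variation of Hodge structure; an integral orthogonal splitting of a fibre is a Hodge class in $\End(\sV)$ that is idempotent and self-adjoint, of numerical type bounded by $\rank\sV$. By Cattani--Deligne--Kaplan the locus in $B$ where any prescribed Hodge class of $\End(\sV)$ occurs is a closed algebraic subvariety, and there are only countably many possibilities (the underlying flat classes lie in a fixed lattice). Hence the decomposable locus $Z = \{\, b\in B : \sT_{S_b}\ \text{decomposable}\,\}$ is a countable union $Z = \bigcup_n Z_n$ of irreducible closed algebraic subvarieties of $B$. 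I would then run a Noetherian induction on $\dim B$: after stratifying and a finite base change, $f$ restricts over each $Z_n$ to a smooth projective family of surfaces, again with finite dimensional motives; if $Z_n$ is a proper subvariety of $B$ its base has strictly smaller dimension; if $Z_n = B$, the idempotent extends to a flat global Hodge class and the polarized variation splits, $\sV = \sA\perp\sC$ with both summands non-zero, whereupon one reruns the whole argument for the two families of polarized Hodge structures $\sA$ and $\sC$, whose generic fibres have strictly smaller rank. Since the rank strictly drops, the recursion terminates at families whose generic transcendental Hodge structure is integrally indecomposable. Assembling the countably many branches of the recursion over the countably many $Z_n$, and noting that only finitely many Hodge structures arise over a zero-dimensional stratum, would yield at most countably many decomposable $\sT_{S_b}$.

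The main obstacle is the passage, inside a single family, from ``the decomposable locus is a countable union of proper subvarieties'' to ``only countably many decomposable Hodge structures arise'': a positive-dimensional component of $Z$ a priori carries a non-isotrivial variation and hence uncountably many fibrewise Hodge structures, so what really has to be shown is that the period map of $\sV$ is locally constant along $Z$ --- equivalently, that a non-trivial orthogonal splitting of the transcendental lattice cannot propagate along a positive-dimensional subfamily with both summands genuinely moving. This is exactly the point at which the finite dimensionality of the motives and the Hodge conjecture for self-products have to be brought to bear, to force any such family splitting to be rigid, in combination with the lattice-theoretic input that a fixed lattice $\sT_{S_b}\cong\sT$ admits only finitely many orthogonal direct-sum decompositions up to isometry. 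Making this rigidity precise --- while correctly accounting for the genuine but sporadic counterexamples to Kulikov's assertion, such as the Fermat sextic, which must land on the zero-dimensional strata --- is the crux of the proof.
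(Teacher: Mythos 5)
Your first step---passing from integral decomposability of $\sT_{S_b}$ to decomposability of the Chow motive $t_2(S_b)$ via the Hodge conjecture for $S_b\times S_b$ and Kimura finite dimensionality (idempotents lift through the nilpotent kernel of $\End_{\sM_{rat}}(t_2(S_b))\to\End_{\sM_{hom}}(t_2(S_b))$)---is essentially the paper's Lemma 3.2 and is sound (with the minor caveat that one should restrict to $p_g\ge 2$, the case $p_g\le 1$ being automatically integrally indecomposable). The genuine gap is in the global step, and you name it yourself: nothing in your argument excludes a positive-dimensional component of the decomposable locus (or, after your Cattani--Deligne--Kaplan stratification, the whole of $B$) along which the variation of the transcendental Hodge structures is non-isotrivial; in that situation uncountably many pairwise non-isomorphic decomposable $\sT_{S_b}$ would occur, and your recursion on the rank of the split summands does not help, since the problem is not whether the summands decompose further but that every fiber is already decomposable. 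The ``rigidity'' you appeal to at that point is precisely the content of the statement to be proved, and it is never established; so the proposal is a reduction of the conjecture to an unproved rigidity assertion rather than a proof. Note also that you work only under the hypotheses of the abstract (Hodge conjecture for $S\times S$ and finite dimensionality of $h(S)$), whereas the paper proves the statement only under the additional hypothesis, stated in the Introduction and in Theorem 3.5, that the transcendental motive of the geometric generic fiber is indecomposable---exactly the hypothesis that rules out the scenario you cannot handle.

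The paper's own argument is quite different in the global step and is motivic rather than Hodge-theoretic. With $K=\overline{\C(B)}$ and $t_2(S_K)$ assumed indecomposable, Theorem 3.5 counts points $b$ rather than Hodge structures: by Vial's spreading lemma [Vial 4, Lemma 2.1], for every $b$ in a countable intersection of dense open subsets the fiber $S_b$ is isomorphic as an abstract scheme to $S_K$, so by the isomorphism (3.1) one gets $\End_{\sM_{rat}}(t_2(S_b))\simeq\End_{\sM_{rat}}(t_2(S_K))\simeq\Q$ and $t_2(S_b)$ is indecomposable; the fibers with decomposable motive are therefore confined to countably many proper closed subvarieties, and an induction on the dimension of these subvarieties (transporting indecomposability to the generic fibers of the subfamilies via Lemma 3.4, which rests on Gille's nilpotence theorem) gives countability of the set of such $b$. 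Corollary 3.6 then converts back to the Hodge-theoretic statement through Lemma 3.2. If you want to keep your route (CDK loci plus splitting of the variation), you must either import the same generic-indecomposability hypothesis---in which case the spread-out argument on endomorphism rings is both simpler and gives the stronger point-count---or actually prove the rigidity statement you flag as the crux, which neither you nor the paper does.
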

 \noindent In Sect. 3 we prove this conjecture for all families  $f : \sS \to B$ such that the  generic fiber has an indecomposable transcendental motive, the closed fibers $S_b$ have a finite dimensional motive and $S_b \times S_b$ satisfies the Hodge conjecture. We also compare our results with those appearing in [Gu] and in [ABB,4.7],  see Remark 3.7.\par
 \noindent In Sect 4 we consider the motive  $h(X)$ of a cubic fourfold  $X$  such that the  Fano variety of lines $F(X)$ is isomorphic to the Hilbert scheme$S^{[2]}$ of length-two subschemes of a  K3 surface  $S$ associated to $X$. We prove (Theorem 4.6) that $h(X)$ contains as a direct summand a motive $t(X)$ which is isomorphic to  $t_2(S)(1)$, where $t_2(S)$ is the transcendental motive of $S$. Examples of this type are given in  4.11(2) and 4.11 (3). As a corollary we get that $h(X)$ is finite dimensional if and only $h(S)$ is finite dimensional (see Cor.4.9), in which case the motive $t(X)$ is indecomposable. Using this result   
we show  that there is a family of K3 surfaces $S$,  which are double covers of $\P^2$ ramified along a sextic and are associated to a family $\sF$ of cubic fourfolds in $\P^5$,  such that the motives $h(S)$ are finite dimensional  and of abelian type, see 4.11(1).\par
\noindent Finally we prove ( Proposition 4.12) that for a very general cubic fourfold  $X$ with a finite dimensional motive $h(X)$  the transcendental motive $t(X)$ is indecomposable.

\section{The Theorems of Zarhin and Kulikov}

Let $X$ be cubic fourfold. A smooth surface $S$ is said to be disjoint from $X$ if one of the following conditions holds, where $\rho(S) =\dim_{\Q} (NS(S)_{\Q})$ ,  $\rho_2(X) = \dim_{\Q}  \bar A^2(X)$,  with  $\bar A^2(X) = Im (cl : A^2(X) \to H^4(X,\Q))$.\par
(i) $h^{2,0}(S)=0$;\par
(ii) $h^{1,1} (S) - \rho(S)< 21 -\rho_2(X)$;\par
(iii) $h^{2,0}(S) =1$ and $h^{1,1}(S) -\rho(S) \ne 21 -\rho_2(X)$.\par 

\noindent The following result  by Y.Zarhin gives  conditions for    
  the class of surfaces that can appear when  resolving the indeterminacy of a rational map $r:  \P^4 \to X$.
\begin {thm}(Zarhin) Let $X$ be a cubic fourfold in $\P^5_{\C}$ and let $r : \P^4 \to X$ be a rational map of finite degree. It is not possible to make $r$  regular by blowing up only points, curves , rational surfaces and surfaces  $S$ disjoint from $X$.\end{thm}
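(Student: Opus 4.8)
\medskip
\noindent\emph{Proof idea.} The plan is to turn a hypothetical resolution of $r$ into an embedding of polarizable Hodge structures that the conditions (i)--(iii) forbid. Assume $r$ can be made regular by blowing up only points, curves, rational surfaces and surfaces disjoint from $X$; since a rational surface $C$ has $h^{2,0}(C)=0$ and so satisfies (i), we may assume every blown-up surface $S_j$ is disjoint from $X$. As recalled in the introduction, Hironaka's theorem gives a tower $\tau\colon\tilde X\to\P^4$ of monoidal transformations with smooth centres of dimension $\le 2$ such that $f=r\circ\tau\colon\tilde X\to X$ is a morphism; since $r$ is dominant of finite degree and $\tau$ is birational, $f$ is surjective and generically finite, so $f_*f^*=(\deg r)\cdot\mathrm{id}$ and $f^*\colon H^4(X,\Q)\hookrightarrow H^4(\tilde X,\Q)$ is a split monomorphism of polarizable $\Q$-Hodge structures. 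By the blow-up formula, $H^4(\tilde X,\Q)$ is a direct sum of $H^4(\P^4,\Q)$, of the contributions of the blown-up points and curves --- all of pure Tate type, being built from $H^0$ and $H^2$ of points and curves --- and of the summands $H^2(S_j,\Q)(-1)$ attached to the blown-up surfaces.

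\medskip
\noindent Next I would isolate transcendental parts. Each $H^2(S_j,\Q)$ splits as $NS(S_j)_\Q\oplus\sT_{S_j}$, with $NS(S_j)_\Q$ of Tate type and $\sT_{S_j}$ the transcendental part; moreover $\sT_{S_j}$ has no non-zero Tate sub-Hodge structure, by the Lefschetz $(1,1)$ theorem together with the non-degeneracy of the intersection form on $NS(S_j)_\Q$. Hence $H^4(\tilde X,\Q)$ is the direct sum of a pure Tate Hodge structure and $\bigoplus_j\sT_{S_j}(-1)$. On $X$'s side, set $\sT_X=\bar A^2(X)^{\perp}\subseteq H^4(X,\Q)$; since the cup-product form is non-degenerate on $\bar A^2(X)$ one has $H^4(X,\Q)=\bar A^2(X)\oplus\sT_X$, and from $h^{3,1}(X)=h^{1,3}(X)=1$, $h^{2,2}(X)=21$ one reads off $h^{3,1}(\sT_X)=h^{1,3}(\sT_X)=1$ and $h^{2,2}(\sT_X)=21-\rho_2(X)$. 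By Zucker's theorem the Hodge conjecture holds in degree $4$ for cubic fourfolds, so $\sT_X$ contains no non-zero Hodge class, hence no Tate sub-Hodge structure and --- being polarizable, hence semisimple --- no Tate quotient. Projecting $f^*(\sT_X)$ onto the pure Tate summand of $H^4(\tilde X,\Q)$ therefore yields zero, and we obtain a monomorphism of Hodge structures
\[ \sT_X(1)\;\hookrightarrow\;\bigoplus_j\sT_{S_j}. \]
Finally, $\sT_X(1)$ is concentrated in types $(2,0),(1,1),(0,2)$ with $h^{2,0}=h^{0,2}=1$ and $h^{1,1}=21-\rho_2(X)$ and has no Tate sub-Hodge structure; hence it is irreducible --- only one irreducible summand of it can carry a non-zero $(2,0)$-part, and any other summand, being purely of type $(1,1)$, would be a copy of $\Q(-1)$, which is excluded.

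\medskip
\noindent It then follows from the displayed embedding and the irreducibility of $\sT_X(1)$ that $\sT_X(1)$ embeds into a single $\sT_S$, with $S$ one of the $S_j$ and hence disjoint from $X$. Recalling that $h^{2,0}(\sT_S)=h^{2,0}(S)$ and $h^{1,1}(\sT_S)=h^{1,1}(S)-\rho(S)$, the embedding forces $h^{2,0}(S)\ge 1$, so condition (i) fails for $S$, and $h^{1,1}(S)-\rho(S)\ge 21-\rho_2(X)$, so condition (ii) fails. If $h^{2,0}(S)\ge 2$ then condition (iii) fails too, contradicting the fact that $S$ is disjoint from $X$. If $h^{2,0}(S)=1$, then the complement of $\sT_X(1)$ in the polarizable structure $\sT_S$ has $h^{2,0}=0$, hence is purely of type $(1,1)$, hence a copy of $\Q(-1)^{\oplus m}$; since $\sT_S$ has no Tate sub-Hodge structure, $m=0$ and $\sT_S\cong\sT_X(1)$, whence $h^{1,1}(S)-\rho(S)=21-\rho_2(X)$ and condition (iii) fails once more --- the same contradiction. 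Therefore no such resolution can exist.

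\medskip
\noindent I expect the real content to lie in the first two paragraphs: realizing that once every Tate summand is discarded, the sole surviving constraint is an embedding of the \emph{irreducible} Hodge structure $\sT_X(1)$ into $\bigoplus_j\sT_{S_j}$, and that the invariants $h^{2,0}$ and $h^{1,1}-\rho$ appearing in (i)--(iii) are exactly those obstructing such an embedding. Within the last step, the delicate case is $h^{2,0}(S)=1$, where the embedding $\sT_X(1)\hookrightarrow\sT_S$ must be upgraded to an isomorphism --- which is precisely why (iii) is phrased with ``$\ne$'' rather than a one-sided inequality. Underpinning everything is Zucker's theorem, which is what guarantees that $\sT_X$ is genuinely transcendental (no Tate sub-Hodge structure) and thus makes the argument uniform over all cubic fourfolds, not merely the very general one.
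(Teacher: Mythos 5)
The paper itself contains no proof of this statement: Theorem 2.1 is quoted from Zarhin's paper [Za], so there is nothing internal to compare against, and your argument has to be judged on its own. As such it is correct and complete, and it reconstructs exactly the mechanism that the surrounding text presupposes (the definition of ``disjoint from $X$'', the structure $\sT_X$ with $h^{3,1}=1$, $h^{2,2}=21-\rho_2(X)$, and the Kulikov-type decomposition of Theorem 2.3): a hypothetical resolution gives, via the blow-up formula and $f_*f^*=(\deg r)\,\mathrm{id}$, a split injection of $H^4(X,\Q)$ into a Tate part plus $\bigoplus_j H^2(S_j,\Q)(-1)$; Zucker's theorem (the Hodge conjecture for cubic fourfolds, which the paper also invokes in its integral form) kills any Tate sub of $\sT_X$, and semisimplicity of polarizable $\Q$-Hodge structures then forces $\sT_X(1)$, which your $h^{2,0}=1$ argument shows is irreducible for \emph{every} cubic fourfold, to embed into a single $\sT_{S_j}$ with $S_j$ disjoint from $X$; comparing $h^{2,0}$ and $h^{1,1}-\rho$ then contradicts each of (i)--(iii), the case $h^{2,0}(S)=1$ being upgraded to an isomorphism $\sT_S\cong\sT_X(1)$ exactly as needed for the ``$\ne$'' in (iii). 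All the small steps you rely on are sound: rational centres satisfy (i), points and curves contribute only Tate summands in degree $4$, $\sT_S$ has no Tate sub by Lefschetz $(1,1)$ and nondegeneracy of the form on $NS(S)_\Q$, and an injection of Hodge structures cannot increase Hodge numbers. This is, in substance, the Hodge-theoretic argument of Zarhin's original paper, so your proposal should be regarded as a faithful reconstruction rather than a genuinely different route.
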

 \begin{rk} If  the cubic fourfold $X$ is very general then any algebraic surface $S \subset X$ is homologous to a complete intersection, and hence  $\rho_2(X)=1$.
Therefore  the family of surfaces  disjoint from $X$ contains all K3 surfaces $S \subset X$, because $h^{2,0}(S)=1$, $h^{1,1}(S)= 20$, $\rho(S)\ge1$ so that
$$h^{1,1}(S) -\rho(S) \le 19 < 20=21-\rho_2(X)$$ 
\noindent  The following example shows that, on the contrary,  when $X$ is not general and rational, then  the locus of indeterminacy of a rational map $r :  \P^4 \to X$ can consist of just one K3 surface $S$ which is not disjoint from $X$.\par
\noindent  Let  $X \subset \P^5_{\C}$  be a cubic fourfold containing two disjoint planes $P_1$ and $P_2$. Choose equations for $P_1$ and $P_2$ as follows 
$$P_1 = \{u=v=w=0\} , P_2 =\{x=y=z=0\} \subset \P^5$$
where  $\{u,v,w,x,y,z\} $ are homogeneous coordinates in $\P^5$. Fix forms $F_1,F_2 \in \C[u,v,w,x,y,z]$ of bidegree $(1,2)$ and $(2,1)$ in the variables $\{u,v,w\}$ and $\{x,y,z\}$. Then the cubic hypersurface of equation $F_1 +F_2 =0$ contains $P_1$ and $P_2$. As in  [ Has 2, 1.2]  one shows that cubic fourfolds of this type are rational.  There is well defined morphism
$$ \rho : ( P_1 \times P_2  - S)   \to X$$ 
which associates to every   $(p_1,p_2) \in P_1 \times P_2$, such that  the line $l(p_1,p_2)$ is not contained in $X$,  the third intersection with $X$. The map $\rho$ is birational because each point of $\P^5 -(P_1 \cup P_2)$ lies on a unique line joining the planes. Here $S$ is  the surface
$$S :  \{F_1(u,v,w,x,y,z) = F_2(u,v,w,x,y,z) =0\}.$$
which  is a K3 surface, a complete intersection  of bidegree $(1,2)$ and $(2,1)$, so that $\rho(S)=2$. Choose an  isomorphism    $\Phi : P_1 \times P_2 \simeq  \P^2 \times \P^2$, where $\{x,y,z\}$ are coordinates in the first copy and $\{u,v,w\}$  in the second one. Then  we get a birational map
$r : \P^4 \to X$ whose locus of indeterminacy is the K3 surface $S$,  which is not disjoint from  $X$. In fact we have $\rho_2 (X) \ge 4$, because $\bar A^2(X)$ contains  $\gamma^2$, with $\gamma$ the class of a hyperplane section, the classes of the two planes $P_1$ and $P_2 $ and the class of $S$ .  Therefore $h^{1,1}(S) -\rho(S)=18$, while $21 -\rho_2(X) \le 17$.\end{rk}
 A cubic fourfold $X$ has Hodge numbers $h^{4,0}=h^{0,4}=0$, $h^{3,1}=h^{1,3}=1$ and  $h^{2,2}=21$. $X$ satisfies the  integral Hodge conjecture  and hence 
$$ CH^2(X) = H^4(X,\Z) \cap H^{2,2}(X) \subset H^4(X,\Z)$$ 
  We also have $H^4(X,\Z) = \Z \gamma^2 \oplus P(X)_{\Z}$, where $\gamma \in H^2(X,Z) =CH^1(X)$ is the class of a hyperplane section and $P(X)_{\Z}$ denotes the  lattice of primitive cycles, i.e. 
 $H^4(X,\Z)_{prim}= P(X)_{\Z} = \ker ( p : H^4(X) \to H^6(X))$,  with $p(\alpha) = \alpha \cdot \gamma$. Poincare' duality induces a symmetric bilinear form
 $$  < \ , \ > : H^4(X)  \times H^4(X)\to \Z$$ 
 such that $<\gamma^2,\gamma^2> =3$. The restriction of  the bilinear form to $P(X)_{\Z}$ is non degenerate and hence yields a splitting
 $$CH^2(X) \simeq \Z \gamma^2 \oplus  CH^2(X)_0$$
 with $CH^2(X)_0 =\ CH^2(X) \cap P(X)_{\Z}$.\par
The integral polarized Hodge structure on the primitive cohomology of $X$, is denoted by
$$ \sH_X= (P(X),H_{prim}^{p,q}, < , >_{P(X)})$$
where $H_{prim}^{p,q}=H^{p,q}(X) \cap (H^4(X,\Z)_{prim} \otimes_{\Z} \C)$.
\noindent Let $T_X \subset P(X)_{\Z} \subset H^4(X,\Z)$ be the orthogonal complement of $CH^2(X)$ in $H^4(X,\Z)$ with respect to $ <  \ , \  >$,  i.e. the lattice of transcendental cycles 
$$H^4_{tr}(X,\Z)= T_X = \{h \in H^4(X)/ <h,\alpha>=0 , \alpha \in CH^2(X) \}.$$
If $X$ is very general then $\rho_2(X)=1$ and hence  the rank of $T_X$ is 22, because  it  is orthogonal, inside $H^4(X)$ of dimension 23,  to $CH^2(X)$ which has dimension 1. \par
\noindent  $T_X$ has a natural polarized Hodge structure 
$$\sT_X = \{T_X,H^{p,q}_T,<\ , \ >_T\}$$
with Hodge numbers $h^{3,1} =h^{1,3} =1$, $h^{2,2} =21-\rho_2(X)$ and $ <  \ , \  >_{T_X}$ as a polarization. It is an integral polarized Hodge substructure of the primitive cohomology. For a very general $X$ the polarized Hodge structure is irreducible.\par
\noindent A polarized Hodge structure
$\sT_X$ is the direct sum of two integral polarized  sub-Hodge structures $(T_{\Z,1},H^{p,q}_1, < , >_{T_1})$ and\par
\noindent   $(T_{\Z,2},H^{p,q}_2,  < , >_{T_2})$ if 
$$\sT_X \simeq   (T_{\Z,1} \oplus T_{\Z,2},H^{p,q}_1 \oplus  H^{p,q}_2, < , >_{T_1}\oplus < , >_{T_2})$$
An integral polarized Hodge structure is indecomposable if it is not a direct sum of two non trivial integral polarized Hodge structures.\par
\noindent   If $S$ is a  complex surface  then the polarized Hodge structure $\sT_S$ on the transcendental cycles $H^2_{tr}(S) =T_S$ is given by 
$$\sT_S =(T_S,H^{p,q}_{T_S}, <,>_{T_S})$$
where $H^{p,q}_{T_S}= (T_S \otimes \C)\cap H^{p.q}(S)$ and $< , >_{T_S}$ is induced by the intersection form on $H^2(S,\Z)$. The polarized Hodge structure  $\sT_S$ is a birational invariant. 
If $p_g(S) \le1$ then $\sT_S$ is integrally   indecomposable.\par
Let  $X$ be a rational cubic fourfold, $r : \P^4 \to X$ a birational map and let $f = r  \circ \tau : \tilde X \to X$ be a birational morphism, as (1.1). Then we have a decomposition of the polarized Hodge structures
$$\sH_X =\tau^*(\sH_{\P^4}) \oplus \bigoplus_{ 0 \le i\le n-1} \sH_i$$ 
where $\sH_i$ is the contribution in $\sH_X$ of the $(i+1)$-th monoidal transformation $\tau_{i+1}$.\par
\noindent V.S. Kulikov  in [Ku, Lemma 2]  proved the following result in the case $X$ is a very general cubic fourfold.
\begin{thm}(Kulikov) Let $X$ be a  very general smooth cubic fourfold which is rational and let $f =r\circ \tau : \tilde X \to X$  be as in (1.1). Then there is an index $i_0 \in \{1,\cdots,n\}$ such that $C_{i_0}$ is a smooth surface $S$ and  the polarized Hodge structure $\sT_S$ on the transcendental cycles  $T_S= H^2_{tr}(S)$  can be decomposed into the direct sum of polarized Hodge structures 
$$\sT_S =-f^*(\sT_X)(1)  \oplus  \tilde \sT_S $$ 
\end{thm}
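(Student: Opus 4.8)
The plan is to compare the description of $H^{4}(\tilde X,\Q)$ coming from the resolution $\tau:\tilde X\to\P^{4}$ with the one coming from the birational morphism $f:\tilde X\to X$, and then to exploit the irreducibility of $\sT_{X}$ together with the semisimplicity of the category of polarizable $\Q$-Hodge structures.

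First I would write down Manin's blow-up formula along the tower (1.1). Iterating the projective bundle formula, since every centre $C_{i-1}$ has dimension $\le 2$, one gets an \emph{orthogonal} decomposition of polarized Hodge structures
$$H^{4}(\tilde X,\Q)\ \simeq\ \tau^{*}H^{4}(\P^{4},\Q)\ \oplus\ \bigoplus_{i}\sH_{i},$$
where $\sH_{i}$ is a sum of copies of $\Q(-2)$ when $\dim C_{i-1}=0$; is again a sum of copies of $\Q(-2)$ in degree $4$ when $\dim C_{i-1}=1$ (the non-Tate class $H^{1}$ of a curve contributes only to $H^{3}$ and $H^{5}$); and, when $C_{i-1}$ is a surface, equals $H^{2}(C_{i-1})(-1)$ carrying the \emph{negative} of the intersection form of $H^{2}(C_{i-1})$. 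The sign comes from the self-intersection relation $i^{*}i_{*}1=-\xi$ for the exceptional divisor $E\xrightarrow{p}C_{i-1}$ of a codimension-two blow-up, together with $p_{*}\xi=1$. Peeling off the Tate summand $NS(C_{i-1})_{\Q}(-1)$, the only non-Tate contributions to $H^{4}(\tilde X,\Q)$ are the pieces $H^{2}_{tr}(C_{i-1})(-1)$ (with negated form) coming from the surface centres.

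Next I would bring in $f=r\circ\tau$. Since $f$ is a birational morphism of smooth projective fourfolds, $f_{*}f^{*}=\mathrm{id}$, and $f^{*}$ preserves cup products and integrals of top forms, so $f^{*}$ is an isometric embedding of polarized Hodge structures; as $\sT_{X}$ is an orthogonal direct summand of $H^{4}(X,\Q)$, it follows that $f^{*}\sT_{X}$ is a polarized sub-Hodge-structure of $H^{4}(\tilde X,\Q)$. Because $X$ is very general, $\sT_{X}$ is irreducible, and it is not of Tate type since $h^{3,1}(\sT_{X})=1$; hence every morphism from $f^{*}\sT_{X}$ to a Tate Hodge structure vanishes, so the projection of $f^{*}\sT_{X}$ onto $\tau^{*}H^{4}(\P^{4},\Q)$ and onto each Tate summand above is zero, and $f^{*}\sT_{X}$ is contained in $\bigoplus_{\dim C_{i-1}=2}H^{2}_{tr}(C_{i-1})(-1)$. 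Composing the inclusion with the projections to the summands, and using that an irreducible Hodge structure maps to each summand either by $0$ or injectively, we obtain an index $i_{0}$ such that $C_{i_{0}}$ is a smooth surface $S$ and $f^{*}\sT_{X}\hookrightarrow H^{2}_{tr}(S)(-1)$, the target carrying the negative of the intersection form; in particular $p_{g}(S)\ge 1$. Finally, by semisimplicity this embedding splits, $H^{2}_{tr}(S)(-1)=f^{*}\sT_{X}\oplus W$ orthogonally with $W$ polarized, and untwisting by $\Q(1)$ while recording the sign of the form gives $\sT_{S}=-f^{*}(\sT_{X})(1)\oplus\tilde\sT_{S}$ with $\tilde\sT_{S}:=W(1)$, which is the asserted decomposition.

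The step I expect to be the main obstacle is the careful bookkeeping of signs and Tate twists in the polarizations: one must check that the projective bundle formula really does equip the surface contribution $H^{2}(S)(-1)\subset H^{4}(\tilde X,\Q)$ with the negative of the intersection form (via $\mathcal{O}_{E}(E)=\mathcal{O}_{E}(-1)$ and $p_{*}\xi=1$), and that this sign convention is exactly the one recorded by the minus sign in $-f^{*}(\sT_{X})(1)$. The remaining ingredients — the comparison of the two decompositions of $H^{4}(\tilde X,\Q)$, the irreducibility of $\sT_{X}$ for very general $X$, and the splitting off of the polarized summand $f^{*}\sT_{X}$ — are then formal.
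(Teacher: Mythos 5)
There is no proof of this statement in the paper to compare with: Theorem 2.3 is quoted from [Ku, Lemma 2], so your proposal has to be judged on its own. Its skeleton is the natural one (and surely close to Kulikov's starting point): compare the blow-up description of $H^{4}(\tilde X,\Q)$ along $\tau$ with the isometric embedding $f^{*}$, use that for very general $X$ the structure $\sT_X$ is irreducible and not of Tate type, and your sign bookkeeping for the surface contribution ($\mathcal{O}_E(E)=\mathcal{O}_E(-1)$, hence the negated intersection form on $H^{2}(C)(-1)$) is correct. But as written the argument does not prove the statement, for two reasons.

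First, the step that singles out $i_0$ only produces an injection of $\Q$-Hodge structures, not an embedding of \emph{polarized} structures. You show $f^{*}\sT_X\otimes\Q$ lies in the sum of the pieces $H^{2}_{tr}(C_j)(-1)$, but nothing you say excludes nonzero components in two or more surface summands; since the summands are mutually orthogonal, the form pulls back as the sum of the forms of the components, so the projection to the single summand $H^{2}_{tr}(S)(-1)$ need not preserve the form. (Using $\End_{HS}(\sT_X\otimes\Q)=\Q\,[id]$ one only gets that the copy inside $\sT_S$ carries some rational multiple $\lambda\,q_{T_X}$ with $\lambda$ a priori in $(0,1)$.) The conclusion $\sT_S=-f^{*}(\sT_X)(1)\oplus\tilde\sT_S$ asserts an isometric copy of $-\sT_X(1)$, so you must either prove the projections to all other non-Tate summands vanish or supply a replacement argument; this is exactly where the substance of Kulikov's lemma lies and it is missing. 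Second, the theorem is a statement about the \emph{integral} polarized Hodge structure $\sT_S$ (the lattice $T_S\subset H^{2}(S,\Z)$ with the intersection form); this is essential for the application, since the conjecture it feeds, the [ABB] Fermat-sextic counterexample, and Remark 3.7 all concern integral decomposability, rational decomposability being far weaker. Your final step invokes semisimplicity of polarizable $\Q$-Hodge structures, which has no integral analogue: orthogonal complementation inside $T_S(-1)$ only yields a finite-index sublattice $f^{*}T_X\oplus W\subseteq T_S(-1)$, not a direct sum decomposition of the lattice itself, so an extra lattice-theoretic argument is required. (A useful remark for the integral side: $f^{*}f_{*}$ is an integral orthogonal idempotent, giving $H^{4}(\tilde X,\Z)=f^{*}H^{4}(X,\Z)\oplus\ker f_{*}$ and hence an integral orthogonal splitting of the transcendental lattice of $\tilde X$, which equals $\bigoplus_j T_{C_j}(-1)$; but the passage from this to a splitting of a single $T_{C_{i_0}}$ with an isometric summand $-f^{*}T_X(1)$ is precisely the point that still has to be argued.)
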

 The above theorem  is used  in [Ku] to prove that a  very general cubic fourfold is not rational, if one assumes the following conjecture: the integral polarized Hodge structure $\sT_S$ on the transcendental cycles on a smooth projective surface is non decomposable. However  in [ABB] it is proved that the above conjecture cannot be true because  the integral polarized Hodge structure of a Fermat sextic $S\subset \P^3_{\C}$ is decomposable. Note that, according to (2.1),  among the surfaces $C_1,\cdots C_n$  in Theorem 2.3 there are surfaces  $S$ which are not rational, not K3 and  also not $\rho$-maximal, because if  $h^{1,1}(S) =\rho(S)$, then $S$ is disjoint from $X$.

 \section{Hodge  conjecture and the rationality of a cubic fourfold} 
  
  In this  section we prove  that, assuming the Hodge conjecture for the product $S \times S$, where $S$ is a complex surface, and the finite dimensionality of $h(S)$,  there are at most a countable number  of  decomposable integral polarized Hodge structures in a family $f : \sS \to B$ of smooth projective surfaces, such that the generic fiber has an indecomposable transcendental motive\par
 \noindent We first recall  from [KMP, 7.2.2]) some results on the Chow-K\"unneth decomposition of the motive of surface.\par
Let $S$ be a smooth projective surface  over a field $k$. The motive $h(S) \in \sM_{rat}(k)$ has a refined Chow-K\"unneth decomposition  
$$h(S)=\sum_{0 \le i\le 4}h_i(S)=\sum_{ 0\le i \le 4} (X,\pi_i)$$
where   $h_2(S) =h^{alg}_2(S) +t_2(S)$, with  $\pi_2 =\pi^{alg}_2 +\pi^{tr}_2$,  $t_2(S)= (S,\pi^{tr}_2)$ and $h^{alg}_2 =(S,\pi^{alg}_2)\simeq \mathbf{L}^{\oplus \rho(S)}$. 
 Here  $ \rho= \rho(S)$  is  the  rank of $NS(S)$.  The transcendental motive $t_2(S)$ is independent of the Chow-K\"unneth decomposition and is a birational invariant for $S$.
 We also have 
 $$H^i(t_2(S))=0  \   for \   i \ne 2    \  ;   \  H^2(t_2(S)) =\pi^{tr}_2 H^2(S,\Q) =H^2_{tr}(S,\Q),$$  
  $$A_i(t_2(S))=\pi^{tr}_2 A_i(S)=0    \  for  \    i \ne 0  \  ; \  A_0(t_2(S)) = T(S),$$  
 where $T(S)$ is the Albanese kernel. The map $\Phi: A^2(S \times S) \to \End_{\sM_{rat}}(t_2(S)$,
defined by $\Phi(\Gamma) =\pi^{tr}_2 \circ \Gamma \circ \pi^{tr}_2 $,  induces an isomorphism (see [KMP, 7.4.3]) 
\begin{equation}  {A^2(S \times S) \over \sJ(S)} \simeq \End_{\sM_{rat}}(t_2(S))\end{equation}
where $\sJ(S)$ is the ideal in $A^2(S \times S)$ generated by correspondences   whose support is contained either in $V\times S$ or in $S\times W$, with $V,W$ proper closed subsets of $S$.
 \begin{lm} Let $S$ be a smooth projective complex surface,with $p_g(S) \ge2$, such that the polarized Hodge structure  $\sT_{S}$ is integrally decomposable. Assume that\par
 (i) The variety $S \times S$ satisfies  the Hodge conjecture;\par
 (ii) The motive $h(S)$ is finite dimensional.\par
 \noindent Then the transcendental motive $t_2(S)$ is decomposable.\end{lm}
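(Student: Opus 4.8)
The plan is to algebraize the Hodge-theoretic splitting of $\sT_S$ using hypothesis (i), and then to promote the resulting homological idempotent to a genuine idempotent of $\End_{\sM_{rat}}(t_2(S))$ using hypothesis (ii). Since $p_g(S)\ge 2$, the hypothesis that $\sT_S$ is integrally decomposable is not vacuous (recall that $p_g(S)\le 1$ forces $\sT_S$ to be indecomposable), and in particular $H^2_{tr}(S,\Q)\neq 0$. Fix a splitting $\sT_S=\sT_1\oplus\sT_2$ into nontrivial integral polarized sub-Hodge structures. Forgetting the polarizations and tensoring with $\Q$ yields a decomposition $H^2_{tr}(S,\Q)=V_1\oplus V_2$ of $\Q$-Hodge structures with $V_i=\sT_i\otimes\Q\neq 0$, and the projection $p_1\colon H^2_{tr}(S,\Q)\to H^2_{tr}(S,\Q)$ with image $V_1$ is a morphism of Hodge structures; it is an idempotent, different from $0$ and from the identity.

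Next I would identify $\End_{\sM_{hom}}(t_2(S))$ with $\End_{HS}(H^2_{tr}(S,\Q))$. Since the Chow--K\"unneth decomposition of a surface is unconditional and $\pi^{tr}_2$ is defined over $\Q$, an algebraic endomorphism of $t_2(S)$ is a cycle class in $H^4(S\times S,\Q)(2)$ acting through $\pi^{tr}_2$ on both sides, hence a Hodge class; conversely, via Poincar\'e duality $H^2(S)^\vee\cong H^2(S)(2)$, every element of $\End_{HS}(H^2_{tr}(S,\Q))$ gives a Hodge class in $H^4(S\times S,\Q)(2)$, which by hypothesis (i) is \emph{algebraic}. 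Thus $\End_{\sM_{hom}}(t_2(S))=\End_{HS}(H^2_{tr}(S,\Q))$, and in particular $p_1$ is realised by an algebraic correspondence $\Gamma\in A^2(S\times S)$. Put $\phi=\Phi(\Gamma)=\pi^{tr}_2\circ\Gamma\circ\pi^{tr}_2\in\End_{\sM_{rat}}(t_2(S))$; its homological realisation is $p_1$, so $\phi^2-\phi$ is homologically trivial.

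Finally I would invoke finite dimensionality. Since $h(S)$ is finite dimensional, so is its direct summand $t_2(S)$, and by Kimura's nilpotency theorem the kernel of $\End_{\sM_{rat}}(t_2(S))\to\End_{\sM_{\num}}(t_2(S))$ is a nilpotent ideal; consequently the kernel $N_0$ of $\End_{\sM_{rat}}(t_2(S))\to\End_{\sM_{hom}}(t_2(S))$, being contained in it, is also nilpotent. Thus $\phi$ is idempotent modulo the nil ideal $N_0$, and since idempotents lift along a quotient by a nil ideal there is $e=e^2\in\End_{\sM_{rat}}(t_2(S))$ with $e\equiv\phi\pmod{N_0}$; the homological realisation of $e$ is again $p_1$. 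Then $t_2(S)=\Im(e)\oplus\Im(\operatorname{id}-e)$, and because $\Im(e)$ and $\Im(\operatorname{id}-e)$ have homological realisations $V_1\neq 0$ and $V_2\neq 0$ respectively, both summands are nonzero. Hence $t_2(S)$ is decomposable.

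The main obstacle is the identification in the second paragraph: one must check carefully that hypothesis (i) produces an algebraic cycle on $S\times S$ inducing the Hodge-theoretic projector $p_1$, keeping track of the Tate twist and of the conjugation by $\pi^{tr}_2$, which annihilates the K\"unneth components other than the one in $H^2(S)\otimes H^2(S)$. Once this is in place, the passage from a homological idempotent to a rational one is the standard lifting argument made available by finite dimensionality, and the only additional input needed is that a direct summand of $t_2(S)$ with nonzero homological realisation is nonzero, which here is automatic since $e$ realises $p_1$ and $V_1,V_2\neq 0$.
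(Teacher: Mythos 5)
Your proposal is correct and follows essentially the same route as the paper: realize the Hodge-theoretic projector coming from the splitting of $\sT_S$ as an algebraic class on $S\times S$ via the Hodge conjecture, view it in $\End_{\sM_{hom}}(t_2(S))$, and use finite dimensionality to lift the resulting homological idempotent to $\End_{\sM_{rat}}(t_2(S))$, yielding a nontrivial splitting of $t_2(S)$. You merely spell out the nilpotent-ideal idempotent-lifting step (and the Tate twist bookkeeping) that the paper leaves implicit, so there is no substantive difference.
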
  
 \begin{proof} Let $\sM_{hom}(\C)$ be the category of homological motives and let  $PHS_{\Q}$ be the  category of  polarized Hodge structures, which is abelian and semisimple. There is a Hodge realization functor 
 $$H_{Hodge} : \sM_{rat} (\C)\to PHS_{\Q}$$ 
 which factors trough $\sM_{hom}(\C)$. Let  $h(S) =\sum_{0\le i\le 4} h_i(S)$ be  a  refined Chow-K\"unneth decomposition with $h_2(S) =h^{alg}_2(S) + t_2(S)$. In the corresponding decomposition for the homological motive $h_{hom}(S)$ one has $h_{2,hom}(S) =t^{hom}_2(S) \oplus \L^{\oplus \rho(S)}$, where $t^{hom}_2(S) =(S ,\pi^{tr}_{2,hom}) $ is the image of $t_2(S)$in $\sM_{hom}$.\par
 \noindent  The Hodge structure  $\sT_{S }$, being decomposable, is the direct sum of two non-trivial polarized Hodge structures. Let  $L \subset H^2_{tr} (S, Q)$ be a non-trivial Hodge substructure such that 
$$ H^2_{tr}(S,\Q)) =L \oplus L'$$ 
where $L'$ is the orthogonal complement of $L$ with respect to the intersection pairing $Q=< \  , \  >_{H^2_{tr}(S)}$. $L$ defines  a morphism of polarized Hodge structure
$$\Phi_L : H^2_{tr}(S,\Q) \to H^2_{tr}(S,\Q)$$
such that $\ker \Phi =L'$ and $\Im \Phi =L$.  We also have $\Phi_L \circ \Phi_L =\Phi_L$.  The K\"unneth decomposition and Poincare' duality  give isomorphisms
$$H^4( S  \times S ,\Q)= \oplus_{p+q=4} (H^p(S,\Q) \otimes H^q(S ,\Q)) \simeq \oplus_p \Hom (H^p(S ),H^p(S ))$$ 
and these isomorphisms are isomorphisms of Hodge structures. The class $\Gamma_L \in H^4(S  \times S ,\Q)$, which corresponds to $\Phi_L$, is a Hodge class, because $\Phi_L $ is a morphism of Hodge structures, see [Vois 5, 1.4]. Therefore  $\Gamma_L \in H^4(S  \times S ,\Q)\cap H^{2,2}(S \times S)$ and hence, by  the Hodge conjecture, $\Gamma_L$ is the class of an algebraic cycle, i.e
 $\Gamma_L \in \bar A^2(S  \times S )$, with $\bar A^2(S  \times S ) = \Im (A^2(S \times S)  \to H^4(S \times S,\Q))$. Then  $\Gamma_L \in \End_{\sM_{hom}}(t^{hom}_2(S))$ and   $\Gamma_L$  is  a projector  different from 0 and from the identity. It follows that $\Gamma_L$ defines a non-trivial submotive  of $t^{hom}_2(S) =(S ,\pi^{tr}_{2,hom}) $. Therefore there exist a projector $\tilde \pi$ such that
$$ \pi^{tr}_{2,hom}= \Gamma_L+\tilde\pi$$
with $\tilde \pi \ne \pi^{tr}_{2,hom}$. From the finite dimensionality of $h(S)$ we get that a similar decomposition also  holds in $\sM_{rat}(\C)$ and hence
 $$t_2(S) =(S,\pi_1) +(S ,\pi_2)$$
 where $(S,\pi_i)$ is a non-trivial submotive of $t_2(S)$. Therefore  $t_2(S)$ is decomposable in $\sM_{rat}(\C)$
 \end{proof} 

\begin{ex} ($\rho$-maximal surfaces) In [Beau] several examples are given of $\rho$-maximal surfaces $S$ such that $h(S)$ is finite dimensional and $S \times S$ satisfies the Hodge conjecture. A complex surface $S$ is $\rho$-maximal if $H^{1,1}(S) =NS(S) \otimes \C$ and hence the Picard number $\rho(S)$ equals $h^{1,1}(S)$. Then the subspace $H^{2,0} \oplus H^{0,2}$ of $H^2(S,\C)$ is defined over $\Q$ (see [Beau, Prop.1] ) and it is the orthogonal complement of $H^{1,1}(S)$. Therefore $T_S \otimes \C =H^{2,0} \oplus H^{0,2}$ and hence the polarized Hodge structure $\sT_S$ decomposes over $\Q$. These examples include the Fermat sextic surface in $\P^3$ and the surface $S =C_6 \times C_6 \subset \P^4$, where $C_6$ is the plane sextic $x^6_0 +x^6_1 +x^6_2=0$. By Lemma 3.2 the motive $t_2(S)$ is decomposable. In the case of the Fermat sextic surface  also the integral  polarized  Hodge structure on the  transcendental part $H^2_{tr} (S,\Z)$ of the cohomology is decomposable, see [ABB,Theorem 1.2 ].\end{ex}
The following Lemma appears in [Gu,Lemma 13].
 \begin{lm} Let $S$ be a smooth projective surface defined over a field $k$ and let $k \subset L$ be a field extension. If  the motive $t_2(S_L)$ is indecomposable then also $t_2(S)$ is indecomposable.\end{lm}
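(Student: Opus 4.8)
The plan is to argue by contraposition: starting from a nontrivial decomposition of $t_2(S)$ over $k$ I will manufacture a nontrivial decomposition of $t_2(S_L)$ over $L$. So suppose $t_2(S)=M_1\oplus M_2$ with $M_i\neq 0$, given by orthogonal idempotents $P_1,P_2\in A^2(S\times S)$ with $P_1+P_2=\pi^{tr}_2$ and $M_i=(S,P_i)$.

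The first step is formal. Base change is a tensor functor $\sM_{rat}(k)\to\sM_{rat}(L)$ which on rings of self-correspondences is a ring homomorphism, so $(P_1)_L,(P_2)_L\in A^2(S_L\times S_L)$ are again orthogonal idempotents summing to $(\pi^{tr}_2)_L$. Furthermore the base-change map $A^2(S\times S)\to A^2(S_L\times S_L)$ is injective — the projection formula handles a finite extension, specialization at a $k$-rational point handles a purely transcendental step (here one uses that $k$ is infinite), and the general case follows by writing $L$ as a filtered colimit — so $(P_i)_L\neq 0$, hence $(M_i)_L=(S_L,(P_i)_L)\neq 0$ and $t_2(S)_L=(M_1)_L\oplus(M_2)_L$.

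The key step is to compare the base change $t_2(S)_L$ with the transcendental motive $t_2(S_L)$ of $S_L$; the difficulty is that the Picard rank can jump. Here I would use that $NS(S)_\Q$ embeds isometrically in $NS(S_L)_\Q$ for the (nondegenerate) intersection form, so the algebraic part of a Chow-K\"unneth decomposition over $L$ can be chosen with $\pi^{alg}_2(S_L)=(\pi^{alg}_2)_L+q$, where $q$ is an orthogonal idempotent with $(S_L,q)\cong\L^{\oplus r}$, $r=\rho(S_L)-\rho(S)\geq 0$; then $(\pi^{tr}_2)_L=\pi^{tr}_2(S_L)+q$ and $t_2(S)_L\cong t_2(S_L)\oplus\L^{\oplus r}$. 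The rigidity of the transcendental motive, namely $A_i(t_2(S_L))=0$ for $i\neq 0$, gives $\Hom_{\sM_{rat}(L)}(\L,t_2(S_L))=0=\Hom_{\sM_{rat}(L)}(t_2(S_L),\L)$, so there are no cross-morphisms in this splitting and $\End_{\sM_{rat}(L)}(t_2(S)_L)\cong\End_{\sM_{rat}(L)}(t_2(S_L))\times M_r(\Q)$, the identity corresponding to the pair of identities.

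Writing the image of $(P_i)_L$ under this isomorphism as $(a_i,b_i)$, the $a_i$ are orthogonal idempotents of $\End_{\sM_{rat}(L)}(t_2(S_L))$ with $a_1+a_2=\mathrm{id}$, so it only remains to see that $a_1,a_2\neq 0$. If $a_i=0$ then $(M_i)_L\cong\L^{\oplus\rank b_i}$, and $(M_i)_L\neq 0$ forces $\rank b_i\geq 1$; but then the Hodge realization of $(M_i)_L$, which coincides with that of $M_i$ and is therefore a sub-Hodge structure of $H^2_{tr}(S,\Q)$, would be a nonzero sum of copies of $\Q(-1)$, i.e.\ a nonzero Hodge class contained in $H^2_{tr}(S)$ --- impossible, since $H^2_{tr}(S)$ is orthogonal to $NS(S)_\Q$. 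Hence $a_1,a_2$ are nontrivial idempotents and $t_2(S_L)\cong\Im(a_1)\oplus\Im(a_2)$ is a nontrivial decomposition. I expect the comparison in the third step --- pinning down $t_2(S)_L$ up to Lefschetz summands and extracting the product decomposition of its endomorphism algebra --- to be the real content; the Hodge-theoretic input in the last step is what keeps this argument within the realm of complex surfaces (the setting of the paper), the general case being [Gu].
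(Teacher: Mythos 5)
Your argument is correct in the situation it is designed for, but it takes a genuinely different route from the paper, and it proves less than the lemma as stated. The paper's proof (following [Gu, Lemma 13]) is three lines: write $\pi^{tr}_2=\pi+\pi'$ for the given decomposition and base change; since $t_2(S_L)$ is indecomposable, one of the two idempotents must die in $\End_{\sM_{rat}}(t_2(S_L))$, and by Gille's theorem [Gil] the kernel of $\End_{\sM_{rat}}(t_2(S))\to\End_{\sM_{rat}}(t_2(S_L))$ consists of nilpotents, so the corresponding idempotent over $k$ is already zero, a contradiction; this works over arbitrary fields. You avoid Gille at the price of three other inputs: injectivity of $A^2(S\times S)\to A^2(S_L\times S_L)$ with $\Q$-coefficients, the comparison $t_2(S)_L\cong t_2(S_L)\oplus\L^{\oplus r}$ together with the vanishing of $\Hom(\L,t_2)$ and $\Hom(t_2,\L)$ (these parts are fine, and in fact make explicit the Picard-rank jump that the paper's wording glosses over, since over there the new algebraic classes simply die modulo $\sJ(S_L)$), and, crucially, a Hodge-theoretic exclusion of a nonzero summand of $t_2(S)$ becoming a sum of Lefschetz motives over $L$.

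That last step is where the generality is lost, and its justification as written is too quick. A Hodge class in the realization of a summand of $t_2(S)$ is, by Lefschetz $(1,1)$, the class of a divisor on $S_{\C}$; orthogonality to $NS(S)_{\Q}$ kills it only when $NS(S)_{\Q}=NS(S_{\C})_{\Q}$, i.e. when no new divisor classes appear over $\C$. So your argument requires $k=\C$ or, more generally, $k$ algebraically closed of characteristic zero together with the standard remark that $NS$ does not change under extension of algebraically closed fields; note also that when $L$ does not embed into $\C$ you must first descend the isomorphism $(M_i)_L\cong\L^{\oplus m}$ to a finitely generated subextension before taking realizations. With those two remarks added, your proof does cover the actual use of the lemma in the paper (Theorem 3.5 applies it with base field $\bar F$, a countable algebraically closed subfield of $\C$, not with $k=\C$), but it does not establish the statement for arbitrary $k\subset L$ (non-closed $k$, where the offending Hodge classes could be divisor classes not defined over $k$, or positive characteristic), which the Gille-based proof does; your closing deferral of the general case to [Gu] is honest, but it should be stated as a restriction on what is being proved rather than left implicit in the phrase ``complex surfaces.''
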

\begin{proof} Suppose that the motive $t_2(S)$ splits into two non-trivial direct summands $T$ and $T' $ in the category $\sM_{rat}(k)$, with $T =(S,\pi)$ and $T ' =(S,\pi')$. Then $\pi^{tr}_2 =\pi +\pi'$, where $\pi$ and $\pi'$ are    idempotents in $A^2(S \times S)$.  Extending scalars from $k $ to $L$ we get a decomposition of $t_2(S_L)$ into the motives $T_L$ and $T'_L$ , by means of the  projectors
$\pi_L$ and $\pi'_L$. Since the motive $t_2(S_L)$ is  indecomposable we get that either $\pi_L$ or $\pi'_L$ is zero. Suppose   $\pi_L=0$. Then, by the results in [Gil], $\pi_L$ is nilpotent and hence it  vanishes. Therefore $T=0$ which is a contradiction because $T$ is  a non-trivial summand of $t_2(S)$.\end{proof}

\begin{thm} Let $f : \sS \to B$ be a smooth projective family of complex surfaces with $B$ an irreducible variety of dimension $n$. Let  $K$ be the algebraic closure of the function field of $B$ and let $S_K$ be the generic fibre  of $f$ over $K$. Assume that the transcendental motive $t_2(S_K)$ is indecomposable. Then there exists at most a countable number of $b \in B$ such that $t_2(S_b)$ is  decomposable. \end{thm}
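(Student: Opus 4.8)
The plan is to show that the possible decompositions of $t_2(S_b)$ are governed by algebraic cycles on the fibres of $\sS\times_B\sS\to B$, that these cycles vary in a \emph{countable} collection of algebraic parameter spaces, and that the hypothesis on $S_K$ — via Lemma 3.5 — forbids any of these parameter spaces from dominating $B$; a dimension induction then upgrades ``contained in a countable union of proper subvarieties'' to ``countable''. First I would use the isomorphism (3.1): $t_2(S)$ is decomposable iff $\End_{\sMrat}(t_2(S))\simeq A^2(S\times S)/\sJ(S)$ carries a nontrivial idempotent, equivalently there is $\Gamma\in A^2(S\times S)$ with $e:=\pi^{tr}_2\circ\Gamma\circ\pi^{tr}_2$ a genuine idempotent correspondence, $e\neq 0$ and $e\neq\pi^{tr}_2$. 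Fixing a refined Chow--K\"unneth decomposition of $S_K$, I would spread its projector $\pi^{tr}_2$ out: over a nonempty open $B^{\circ}\subseteq B$ (after a generically finite base change, which does not affect the form of the conclusion) there is a relative correspondence $\tilde\pi$ on $\sS|_{B^{\circ}}\times_{B^{\circ}}\sS|_{B^{\circ}}$ restricting to $\pi^{tr}_2$ on $S_K$; after shrinking $B^{\circ}$, $\tilde\pi$ is fibrewise a projector, and off the Noether--Lefschetz locus $\mathrm{NL}=\{b:\rho(S_b)>\rho(S_K)\}$ — a countable union of proper closed subvarieties — the pair $(\sS,\tilde\pi)$ is fibrewise $t_2(S_b)$.

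The core step is the parametrization. Codimension-$2$ cycles on the fibres of $\sS\times_B\sS\to B$ are, modulo rational equivalence, parametrised by a countable disjoint union of quasi-projective $\C$-varieties: using the relative Hilbert schemes $\operatorname{Hilb}^P(\sS\times_B\sS/B)$ (countably many $P$'s), each of finite type over $B$, together with the cycle map, every effective codimension-$2$ cycle on a fibre occurs as a fibre of one of finitely many irreducible components, each carrying a universal relative cycle. Forming differences with a rational denominator yields countably many families $W_\lambda\to B^{\circ}$ with universal relative correspondences $\Gamma_\lambda$, hence relative correspondences $e_\lambda:=\tilde\pi\circ\Gamma_\lambda\circ\tilde\pi$. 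Using the standard fact that, for two families of cycles over a base, the locus where they become rationally equivalent on every fibre is a countable union of closed subvarieties of the base, the subset $V_\lambda\subseteq W_\lambda$ on which $e_\lambda$ is fibrewise a nontrivial idempotent is a countable union of locally closed subvarieties, and every $b\in B^{\circ}\setminus\mathrm{NL}$ with $t_2(S_b)$ decomposable lies in the image of some $V_\lambda$. Were some irreducible component $V$ of some $V_\lambda$ to dominate $B^{\circ}$, a point of $V$ over the generic point of $B$ would give, by base change of the universal correspondence, a nontrivial idempotent of $\End_{\sMrat}(t_2(S_L))$ for a finite extension $L/k(B)$, hence of $\End_{\sMrat}(t_2(S_K))$ by Lemma 3.5 — contradicting the indecomposability hypothesis. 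Thus each $V_\lambda$ maps into a proper closed subvariety $Z_\lambda\subsetneq B$, and
$$\Sigma:=\{b\in B:t_2(S_b)\text{ decomposable}\}\ \subseteq\ (B\setminus B^{\circ})\ \cup\ \mathrm{NL}\ \cup\ \bigcup_\lambda Z_\lambda ,$$
a countable union of proper closed subvarieties of $B$.

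The passage from this containment to the countability of $\Sigma$ is the part I expect to be the main obstacle, and I would attempt it by induction on $n=\dim B$ (the case $n=0$ being trivial). For each irreducible component $Z$ of dimension $<n$ in the union above, $\Sigma\cap Z$ is exactly the bad locus of the restricted family $\sS|_Z\to Z$, and the inductive hypothesis disposes of $Z$ once one knows that $t_2$ of the generic fibre of $\sS|_Z\to Z$ is again indecomposable. This is the delicate point, since $\kappa(\eta_Z)$ has no a priori relation to $K$ and decomposability cannot simply be specialised. I would argue ``one stratum down'': if $\Sigma\cap Z$ were uncountable, then spreading the fibrewise idempotents along $Z$ and passing to a generically finite cover produces a relative correspondence over a dense open of that cover giving a nontrivial idempotent of $\End_{\sMrat}(t_2(S_{\kappa(\eta_Z)}))$; one must then show that — because the construction of the $V_\lambda$ for $\sS/B$ is compatible with restriction to $Z$ — such an idempotent already occurs on a parameter space attached to $\sS/B$, forcing the corresponding $V_\lambda$ to dominate $Z$ and, by the non-domination established above now applied over the stratum $Z$, producing again a nontrivial idempotent of $\End_{\sMrat}(t_2(S_K))$, a contradiction. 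Making this precise — the behaviour of $\tilde\pi$, of $\pi^{tr}_2$ in the presence of Picard jumps, and of the ideal $\sJ$ under restriction to $Z$, together with the bookkeeping of the finite covers introduced at each stage (each only multiplying the number of strata by a finite factor, so countability is preserved) — is where the real work lies. Under the running hypotheses of the section this can alternatively be recast: combining the argument of Lemma 3.2 with finite dimensionality, $t_2(S_b)$ is decomposable precisely when $H^2_{tr}(S_b,\Q)$ is a reducible polarised Hodge structure, which turns the statement into one about the transcendental variation of Hodge structure over $B$ (whose generic fibre is irreducible by hypothesis) and brings the algebraicity of Hodge loci to bear; but even in that guise the exclusion of positive-dimensional bad components is the crux.
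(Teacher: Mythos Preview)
Your Hilbert-scheme machinery reconstructs from scratch what the paper gets in one stroke from [Vial 4, Lemma 2.1]. Since the family descends to a countable subfield $k\subset\C$, for every $b$ in a countable intersection $U=\bigcap_i(B_0\setminus Z_i)(\C)$ of Zariski opens there is an abstract field isomorphism $\C\cong K$ carrying the scheme $S_b$ to the scheme $S_K$; Chow groups depend only on the underlying scheme, so $\End_{\sM_{\rat}}(t_2(S_b))\cong\End_{\sM_{\rat}}(t_2(S_K))$ and $t_2(S_b)$ is indecomposable for all such $b$. This single observation replaces your spreading of $\pi^{tr}_2$, the relative Hilbert schemes, the universal correspondences $e_\lambda$, and the countable-union analysis of rational-equivalence loci, and it already yields $\Sigma\subset\bigcup_iZ_i(\C)$ without any mention of the Noether--Lefschetz locus or of the behaviour of $\sJ$ under restriction.

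For the dimension induction the paper again proceeds more directly. Given an irreducible stratum $Z\subset B_0$ of dimension $m$, it embeds $L=\overline{k(\eta_Z)}$ into $\C\subset K$ and invokes Lemma 3.4 (your ``Lemma 3.5'' is presumably this) to conclude that $t_2(S_L)$ is indecomposable from the hypothesis on $t_2(S_K)$; Vial's lemma applied to $\sS|_Z\to Z$ then pushes the decomposable fibres into $\bigcup_jT_j(\C)$ with $\dim T_j\le m-1$, and the induction closes. Your own treatment of this step is, as you concede, incomplete: the chain ``$\Sigma\cap Z$ uncountable $\Rightarrow$ idempotent for $t_2(S_{\kappa(\eta_Z)})$ $\Rightarrow$ some $V_\lambda$ dominates $Z$ $\Rightarrow$ idempotent for $t_2(S_K)$'' fails at the last arrow, since a component dominating $Z$ produces an idempotent only over $\overline{k(Z)}$, not over $K$; and your fallback via Lemma 3.2 is unavailable because Theorem 3.5 assumes neither finite dimensionality nor the Hodge conjecture (those hypotheses enter only in Corollary 3.6). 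You should compare your inductive step with the paper's use of the inclusion $L\subset K$ together with Lemma 3.4---that is exactly the idea your argument lacks---though you may also wish to examine carefully whether, under the chosen embedding, $(S_L)_K$ really coincides with the surface $S_K$ to which the hypothesis applies, as the paper's reasoning at that point is quite compressed.
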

\begin {proof} By [Vial 4, Lemma 2.1] there exist a subset $U \subset B(\C)$, which is a countable intersection  of Zariski open subsets $U_i$, such that, for each point  $b \in U$ there is an isomorphism between $\C$ and $K$ that yields an isomorphism between  the scheme $S_b$ over $\C$ and the scheme $S_K$ over $K$. Here $U_i = (B_0 - Z_i)(\C)$ where $B_0$ is defined over a countable subfield $k \subset \C$ and $Z_i$ is a proper subvariety  of $B_0$. Since the Chow groups of a variety $X$ over a field only depend on $X$ as a scheme we get an isomorphism 
$$CH^2(S_b \times S_b) \simeq CH^2(S_K \times S_K)$$
which, by applying the isomorphism in (3.1), induces an isomorphism between the $\Q$-algebras $\End_{\sM_{rat}}(t_2(S_b))$ and $\End_{\sM_{rat}}(t_2(S_K))$. Since $t_2(S_K)$ is indecomposable 
$\End_{\sM_{rat}}t_2(S_K) \simeq \Q$. Therefore\par
\noindent  $\End_{\sM_{rat}}(t_2(S_b))\simeq \Q$  and hence $t_2(S_b)$ is indecomposable, for all $b \in U (\C)=\bigcap_i U_i(\C) \subset B(\C)$.\par
\noindent  We claim that there exists at most  a countable number of $b \in B(\C)$ such that the corresponding  fibre $S_b$ has a decomposable transcendental motive.\par
\noindent  If $t_2(S_b)$ is decomposable then  $b \notin U(\C)$ and hence  there exists an  index $i_0$ such that $b \in Z_{i_0}(\C)$, with $Z_{i_0}$ a subvariety of $B_0$. Let  $Z =Z_{i_0} $ with $ \dim \  Z =m <n$. We proceed by induction on $m$.\par
\noindent  If $m=0$ then $Z(\C)$ is just a finite collection of points in $B(\C)$ and hence the corresponding fibers are a finite set. Assume that, if  $Y \subset B_0$ with $\dim Y \le m-1$, there are only a countable number of $b \in Y(\C)$ such that $t_2(S_b)$ is decomposable. Let  $Z \subset B_0$ of dimension $m$ and let $z \in Z(\C)$ be  such that  $t_2(S_z)$ is decomposable. Since there are only a finite number of irreducible components of $Z$ we may as well assume that $Z$ is irreducible. Let $\eta_Z$ be the generic point of $Z$ and let $F = k(\eta_Z)$ be the  function field of $Z$, which is a finitely generated extension of the countable subfield $k$ of $\C$. Choose an embedding  of  the algebraic closure $L=\bar  F$ into $\C$. Then $L \subset K$. By Lemma 3.4 $t_2(S_L)$ is indecomposable, because $t_2(S_K)$ is indecomposable. Therefore we may apply Lemma 2.1 in [Vial 4] to the family $f_Z : \sS_Z \to Z$,  where $\sS_Z =\sS \times_B Z$, whose generic fiber over the algebraic closure of $k(Z)$ is $S_L$. It follows that
$t_2(S_z)$ is indecomposable for  all points $z \in Z(\C)$ in a countable intersection $V= \cap_j (Z -T_j)(\C)$, where $T_j $ is a proper subvariety of $Z$. Therefore, if $t_2(S_z)$ is decomposable,  then there exists 
an index $j_0$ such that  $z \in T_{j_0}(\C)$. Since $\dim T_{j_0} \le m-1$, by the induction hypothesis, there are at most a countable number of  points $z \in T_{j_0} (\C)\subset B(\C)$ such that $t_2(S_z)$  is decomposable. Therefore   there are at most a countable  number of points in $ z \in Z(\C)$ such that the corresponding fiber $S_z$ has a decomposable transcendental motive.\end{proof}
\begin {cor} Let $f : \sS \to B$ as in Theorem 3.5 and assume furthermore that for all surfaces in the family  the motive $h(S)$ is finite dimensional and $S\times S$ satisfies  the Hodge conjecture.
 Then the integral polarized Hodge structure  $\sT_{\sS_b}$ arising from the fibers $\sS_b$ which are decomposable are countably many.\end{cor}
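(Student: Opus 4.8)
The plan is to deduce this corollary formally from Theorem 3.5 and Lemma 3.2, via a contrapositive argument, with a small separate treatment of the case of small geometric genus. First I would set $D \subset B(\C)$ to be the set of points $b$ such that the integral polarized Hodge structure $\sT_{S_b}$ is decomposable, and the goal is to show that $D$ is at most countable by placing it inside the countable ``bad locus'' produced by Theorem 3.5.

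The key point is that a surface $S$ with $p_g(S)\le 1$ has $\sT_S$ integrally indecomposable (as recalled before Theorem 2.3). Hence for every $b\in D$ we necessarily have $p_g(S_b)\ge 2$, so that the hypotheses of Lemma 3.2 are met: by the assumption of the corollary, $S_b\times S_b$ satisfies the Hodge conjecture and $h(S_b)$ is finite dimensional. Lemma 3.2 then gives that $t_2(S_b)$ is decomposable in $\sM_{rat}(\C)$. Therefore $D$ is contained in the set $N=\{\,b\in B(\C): t_2(S_b)\text{ is decomposable}\,\}$.

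Now I would invoke Theorem 3.5: since by hypothesis the generic fibre $S_K$ has indecomposable transcendental motive $t_2(S_K)$, the set $N$ is at most countable. Consequently $D\subseteq N$ is at most countable, which is precisely the assertion that only countably many of the polarized Hodge structures $\sT_{\sS_b}$ arising from the fibers of $f$ are decomposable.

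The argument is essentially formal once Lemma 3.2 and Theorem 3.5 are in hand; the only point requiring attention is the case $p_g(S_b)\le 1$, which is not covered by Lemma 3.2 but is handled directly by the integral indecomposability of $\sT_S$ for such surfaces. (One may also note that $p_g$ is constant along the smooth projective family $f$, so one is uniformly in one of the two cases, but this is not needed.) The genuine mathematical content sits upstream: the countability of the locus of decomposable $t_2$ in Theorem 3.5, and the translation between decomposability of $\sT_S$ and of $t_2(S)$ afforded, under the Hodge conjecture and finite dimensionality, by Lemma 3.2 — so there is no real obstacle left at the level of this corollary.
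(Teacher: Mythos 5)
Your proof is correct and follows essentially the same route as the paper: apply Lemma 3.2 to pass from integral decomposability of $\sT_{S_b}$ to decomposability of $t_2(S_b)$, then conclude by the countability statement of Theorem 3.5. Your explicit check that $p_g(S_b)\ge 2$ for the relevant fibers (so that the hypothesis of Lemma 3.2 is satisfied) is a small point the paper leaves implicit, but it does not change the argument.
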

 \begin {proof}  Let $b \in B$ be such that the  polarized Hodge structure  $\sT_{S_b}$ is integrally decomposable. By Lemma 3.2  the transcendental motive $t_2(S_b)$ is decomposable. From Theorem 3.5 we get that  $b$ belongs to a countable subset of $B(\C)$
\end{proof}
\begin {rk}(Integral decomposability) In [Gu, Thm.22] it is proven, using the Theorem of Kulikov, that a very general cubic fourfold  $X$ is not rational if one assumes that, for every smooth projective surface $S$,  the transcendental motive $t_2(S)$ is  integrally indecomposable and finite dimensional. Since the motive $t_2(S)$ lives in the category of $\sM_{rat}$ with rational coefficients it is not a priori clear what it means to be  integrally  decomposable. According to the definition given in [Gu] the motive  $t_2(S)$ decomposes  integrally if the identity of the ring $R=CH^2(S \times S)/\sJ(S)$ is the sum of two orthogonal non-torsion idempotents, i.e. if there exist projectors  $\Pi_1 , \Pi_2  $ such that the class $[\Delta_S]$ in $R$ decomposes as $[\Delta_S] = [\Pi_1]+ [\Pi_2]$. After tensoring with $\Q$ the ring $R\otimes \Q$  becomes isomorphic to $End_{\sM_{rat}}(t_2(S))$, as in (3.1), and the image of  $[\Delta_S]$ is the identity of $t_2(S) =(S,\pi^{tr}_2)$. Therefore $\pi^{tr}_2 = \Phi (\Pi_1) +\Phi (\Pi_2)$, with $\Phi : A^2(S \times S) \to End_{\sM_{rat}}(t_2(S))$.\par
\noindent  Let  $S =C_6 \times C_6 \subset \P^4$, where $C_6$ is the Fermat sextic in $\P^2_{\C}$. The surface $S$ is $\rho$-maximal and the  transcendental motive $t_2(S)$ is finite dimensional  and decomposable in $\sM_{rat}$, see Ex.3.3. In  [Gu,Sect 5]  it is proven that $t_2(S)$  is not integrally decomposable.\par
\noindent In [ABB, 4.7] it is noted that the irrationality of a very general cubic fourfold will follow if every complex surface $S$, such that the  integral polarized Hodge structure  $\sT_S$ is decomposable, has maximal rank. According to Theorem 2.1, among the surfaces  $S_i$, with  $ 1\le i\le n$, that  can appear when  resolving the indeterminacy of a rational map $r:  \P^4 \to X$, for  a cubic fourfold $X$, there are surfaces which are not $\rho$-maximal, because these are all disjoint from $X$. On the other hand, by  Theorem 2.3, there is an index $i_o$ such that $\sT_{S_{i_0}}$ is decomposable.
\end{rk}

\section {The motive of cubic fourfold}
Let $X$ be a  complex  Fano  3-fold. By the results  in [GG]  its motive  $h(X)$ has  the following 
Chow-K\"unneth decomposition
 $$h(X) =\un \oplus   \L^{\oplus b}\oplus N \oplus (\L^2)^{\oplus b}  \oplus \L^3$$
where $b =dim NS(X)_{\Q}$ and $N =h_1(J) \otimes \L$, with $J=J^2(X)$ the intermediate Jacobian of $X$. Therefore $h(X)$ is finite dimensional and lies in the subcategory of $\sM_{rat}(\C)$ generated by the motives of Abelian varieties. In particular the motive of cubic 3-fold  in $\P^4_{\C}$ is finite dimensional.\par
 \noindent The  above result  is based on the fact that all Chow groups of a cubic 3-fold are representable. This is not the case if $X$ is  a cubic fourfold in $\P^5_{\C}$, because not all the Chow groups of $X$ are representable. Indeed if, for a smooth projective variety, all Chow groups are representable, then the Hodge numbers $h^{p,q}$ vanish, whenever $\vert p -q \vert >1$ (see [Vial 2. Thm.4])   and this is not the case for a cubic fourfold $X$,  because $h^{3,1}(X)=h^{1,3}(X)=1$.\par
In this section we show that, for a cubic fourfold $X$ such that its Fano variety $F(X)$ is isomorphic to $S^{[2]}$, with $S$ a K3 surface, then $h(X)$ is finite dimensional if and only if $h(S)$ is finite dimensional.
Note that, by a result of R.Laterveer in [Lat ],  if $h(X)$ is finite dimensional then also $h(F(X))$ is finite dimensional.\par 
 \noindent Every cubic fourfold $X$ is rationally connected and hence $CH_0(X) \simeq \Z$. Rational, algebraic and homological equivalences all coincide for cycles of codimension 2 on $X$. Hence the cycle map 
$CH^2(X) \to H^4(X,\Z)$ is  injective and $A^2(X)=CH^2(X) \otimes \Q$ is a sub vector space of dimension $\rho_2(X)$ of $H^4(X,\Q)$.  By  the results in [TZ] we  have  $A_1(X)_{hom} = A_1(X)_{alg}$. Moreover homological equivalence and numerical equivalence coincide for algebraic cycles on $X$, because the standard  conjecture $D(X)$ holds true. Therefore $A_1(X)_{hom}=A_1(X)_{num}$.\par
 \noindent A cubic fourfold  $X$ has no odd cohomology and  $H^2(X,\Q) \simeq NS(X)_{\Q} \simeq A^1(X)$, because  $H^1(X ,\sO_X) =H^2(X,\sO_X)=0$. Let $\gamma\in A^1(X)$  be the class of  a hyperplane section. Then $H^2(X,\Q)= A^1(X) \simeq \Q \gamma$ and  $H^6(X,\Q)= \Q[\gamma^2/3]$. Here   $< \gamma^2,\gamma^2>= \gamma^4 =3$, where   $ < \ , \ >$ is the intersection form on $H^4(X,\Q)$.\par
\noindent Let  $\pi_0 =[X \times P_0], \pi_8 = [P_0 \times X]$, where $P_0$ is a closed point and 
$\pi_2 =(1/3) (\gamma^3  \times \gamma)$, $\pi_6 =\pi^t_2 = (1/3)( \gamma \times \gamma^3)$. Then 
$$h(X) \simeq \un \oplus h_2(X) \oplus h_4(X) \oplus h_6(X) \oplus \L^4$$
where  $\un \simeq(X, \pi_0)$, $\L^4 \simeq (X, \pi_8)$, $h_2(X)=(X,\pi_2)$, $h_6(X) = (X,\pi_6)$ and $h_4(X) =(X,\pi_4)$, with $\pi_4 =\Delta_X -\pi_0-  \pi_2 -\pi_6 -\pi_8$. The above decomposition of the motive $h(X)$ is in fact integral, because
$$\gamma^3 =3\vert l \vert$$
for a line $l \in F(X)$, see [SV, Lemma A3].\par
 \noindent Let  $\rho_2$ be the dimension of $A^2(X)$ and let $\{D_1, D_2\cdots,D_{\rho_2} \}$  be a $\Q$-basis . We set 
 $$\pi_{4,i} = {[D_i \times D_i]\over <D_i,D_i>},$$
 for $i =1,\cdots, \rho_2$. Then $\pi_{4,i}$ are idempotents and the motive $M_i =(X,\pi_{4,i},0)$ is isomorphic to the Lefschetz motive $\L$ for every $i$. Let $h^{alg}_4(X)$ be the motive $(X,\pi^{alg}_4)$ where
 $\pi^{alg}_4 = \sum_{ 1 \le i \le \rho_2}\pi_{4,i}$. Then  the motive  $h_4(X)$  splits as follows 
$$h_4(X) =h^{alg}_4(X)  \oplus h^{tr}_4(X),$$
where  $h^{alg}_4(X) \simeq \L^{\oplus \rho_2}$ and $h^{tr}_4(X) = (X,\pi^{tr}_4)$, with $\pi^{tr}_4 =\pi_4 - \pi^{alg}_4$.\par 
\noindent Therefore we  get the following  Chow-K\"unneth decomposition of the motive $h(X)$

  \begin {equation} h(X) =  \un +h_2(X) + \L^{\oplus \rho_2} +t(X) +h_6(X)+ \L^4 \end{equation}

\noindent where $t(X) =h^{tr}_4(X)$. The motives $h_2(X)$ and $h_6(X)$ are finite dimensional (see [Ki, 8.4.5]) and hence in  the decomposition  (4.1) all motives, but possibly $t(X)$, are finite dimensional. Therefore  the motive $h(X)$ is finite dimensional if and only if $t(X)$ is  evenly finite dimensional.\par \noindent Let  $\sM^o_{rat}(k)$ the category of birational motives  (see [KMP, 7.5]). Writing  $\bar h(X)$ for the birational motive of a smooth
projective variety $X$ of dimension $d$, and $\Hom_{bir}$ for morphisms
in $\sM^o_{rat}(k)$, the fundamental formula is:
$$A_0(X_{k(Y)})=\varinjlim_{U\subset Y} A^d (U \times X)
                 \cong \Hom_{bir}(\bar h(Y), \bar h(X)).$$
If $X$ is a cubic fourfold then $\bar h(X)=\un$ in $\sM^o_{rat}(\C)$, because $X$ is unirational, see [Kahn,7.3]. From the decomposition in (4.1) we get
$\bar t(X)=0$, because the Lefschetz motive $\L$ goes to 0 in $\sM^o_{rat}$. Therefore the identity map  of $t(X)$ factors trough a motive of the form $M(1)$, see [KMP, 7.5.3]. More precisely, by  [Vial 3 2.2],  the motive $t(X) $ is a direct summand of $h(Z)(1)$ for some surface $Z$ and hence $t(X)$ is finite dimensional if $h(Z)$ is finite dimensional.
 \begin {lm}Let $X$ be a cubic fourfold and let $t(X)$ be the transcendental motive in the Chow-K\"unneth decomposition (4.1). Then $A^i(t(X)) = 0$ for $i \ne 3$ and $A^3(t(X))=A_1(X)_{hom}$
  \end{lm}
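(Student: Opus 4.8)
The plan is to compute the correspondence action of $\pi^{tr}_4$ on each group $A^i(X)$ directly, using $t(X)=(X,\pi^{tr}_4)$ and
$$\pi^{tr}_4=\Delta_X-\pi_0-\pi_2-\pi_6-\pi_8-\pi^{alg}_4,$$
so that $(\pi^{tr}_4)_*=\mathrm{id}-(\pi_0)_*-(\pi_2)_*-(\pi_6)_*-(\pi_8)_*-(\pi^{alg}_4)_*$ on every $A^i(X)$. I will use the groups recalled above, namely $A^0(X)=\Q[X]$, $A^1(X)=\Q\gamma$, $A^2(X)$ of dimension $\rho_2$ and injecting into $H^4(X,\Q)$, $A^3(X)=A_1(X)$, $A^4(X)=\Q[P_0]$, together with $\gamma^4=3$ and $\gamma^3=3\vert l\vert$. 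Since $\Delta_X$ acts as the identity, it suffices to show that for each $i\ne 3$ exactly one of the five remaining projectors acts as the identity on $A^i(X)$ while the others annihilate it, and then to identify what is left on $A^3(X)$.

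All the needed computations are instances of the projection formula on $X\times X$, using that $(p_2)_*p_1^*\beta$ equals $\deg(\beta)[X]$ when $\beta$ is a zero--cycle and vanishes otherwise. One finds: $\pi_8=[P_0\times X]$ is the identity on $A^0(X)$ and kills $A^i(X)$ for $i\ge 1$; dually $\pi_0=[X\times P_0]$ is the identity on $A^4(X)$ and kills $A^i(X)$ for $i\le 3$; $\pi_2=\tfrac13(\gamma^3\times\gamma)$ sends $\alpha\mapsto\tfrac13\deg(\gamma^3\cdot\alpha)\gamma$, hence is the identity on $A^1(X)=\Q\gamma$ and vanishes on the other $A^i(X)$; $\pi_6=\tfrac13(\gamma\times\gamma^3)$ sends $\alpha\mapsto\deg(\gamma\cdot\alpha)\vert l\vert$, hence vanishes on $A^i(X)$ for $i\ne 3$ and on $A^3(X)=A_1(X)$ is the idempotent with image $\Q\vert l\vert$; and, taking the basis $\{D_1,\dots,D_{\rho_2}\}$ of $A^2(X)$ orthogonal for the intersection form, $\pi^{alg}_4=\sum_i\pi_{4,i}$ sends $\alpha\in A^2(X)$ to $\sum_i\frac{\langle D_i,\alpha\rangle}{\langle D_i,D_i\rangle}D_i=\alpha$ and kills $A^i(X)$ for $i\ne 2$. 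Hence $(\pi^{tr}_4)_*=0$ on $A^i(X)$ for $i=0,1,2,4$, i.e.\ $A^i(t(X))=0$ for $i\ne 3$.

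On $A^3(X)=A_1(X)$ only the term $(\pi_6)_*$ survives, so $(\pi^{tr}_4)_*=\mathrm{id}-(\pi_6)_*$ is the idempotent $\alpha\mapsto\alpha-\deg(\gamma\cdot\alpha)\vert l\vert$, whose image is $\{\alpha\in A_1(X):\deg(\gamma\cdot\alpha)=0\}$. Since $H^6(X,\Q)$ is one--dimensional and the duality pairing $H^6(X,\Q)\times H^2(X,\Q)\to H^8(X,\Q)=\Q$ is perfect with $H^2(X,\Q)=\Q\gamma$, a $1$--cycle $\alpha$ is homologically trivial exactly when $\deg(\gamma\cdot\alpha)=0$; moreover $\vert l\vert$ has nonzero image in $H^6(X,\Q)$, hence generates it, because $\deg(\gamma\cdot\vert l\vert)=1$. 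Thus $A_1(X)=\Q\vert l\vert\oplus A_1(X)_{hom}$, the second summand being precisely the image of $\mathrm{id}-(\pi_6)_*$, and therefore $A^3(t(X))=A_1(X)_{hom}$ (equivalently $=A_1(X)_{alg}=A_1(X)_{num}$, as recalled above).

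The one step requiring care is the behaviour of $\pi^{alg}_4$ on $A^2(X)$: for $\pi^{alg}_4$ to be a genuine idempotent realising the identity of $A^2(X)$ (equivalently, $h^{alg}_4(X)\simeq\L^{\oplus\rho_2}$) one needs an orthogonal $\Q$--basis of $A^2(X)$, hence non--degeneracy of the intersection form on $A^2(X)\subset H^4(X,\Q)$. This is the only input that is not entirely formal; it follows from the injectivity of $CH^2(X)\to H^4(X,\Z)$ together with the Hodge index theorem for the polarised Hodge structure on $H^4(X)$ (the form is definite on the $(2,2)$--part of the primitive cohomology), and when $X$ is very general it is immediate, since then $A^2(X)=\Q\gamma^2$ and $\langle\gamma^2,\gamma^2\rangle=3$.
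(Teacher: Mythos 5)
Your proof is correct and follows essentially the same route as the paper: you compute the action of each Chow--K\"unneth projector on each $A^i(X)$ and identify the image of the idempotent $\mathrm{id}-(\pi_6)_*$ on $A_1(X)$ with $A_1(X)_{hom}$. The only differences are minor: where the paper concludes by observing that $\pi^{tr}_4(\beta)$ is numerically trivial and then uses $A_1(X)_{num}=A_1(X)_{hom}$ (recalled earlier from [TZ] and the standard conjecture $D(X)$), you argue directly via $H^6(X,\Q)\simeq\Q$ being generated by the class of a line, and your explicit insistence on an orthogonal $\Q$-basis $\{D_i\}$ of $A^2(X)$ (hence nondegeneracy of the intersection form there) makes precise a point the paper leaves implicit in its definition of $\pi^{alg}_4$.
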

\begin{proof} The cubic fourfold $X$ is rationally connected and hence $A^4(X)=A_0(X)=0$ that implies $A^4(t(X))=0 $. Also from the Chow-K\"unneth decomposition in (4.1) we get $A^0(t(X)) =0$ and  
$A^1(t(X))= \pi^{tr}_4 A^1(X)=0$, because $A^1(h_2(X))= \pi_2A^1(X) =A^1(X)$\par
\noindent We first  show that $A^2(t(X))=0$. Let $\alpha \in A^2(X)$. Then
$$\pi^{tr}_4 (\alpha) = \alpha -  \pi_0(\alpha) -\pi_2(\alpha) -\pi^{alg}_4(\alpha) - \pi_6(\alpha) - \pi_8(\alpha),$$
where $\pi_0(\alpha) = \pi_8(\alpha)=0$. We also have $\pi_2(\alpha) = (1/3)(p_2)_*(p^*_1(\alpha)\cdot [\gamma^3 \times\gamma])=0$ and $\pi_6(\alpha) = (1/3)(p_2)_*(p^*_1(\alpha)\cdot [\gamma^1 \times\gamma^3])=0$. Let $\alpha \in  A^2(X)$ and let $\pi^{alg}_4 = \sum_{1 \le i \le \rho_2} \pi_{4,i} $, with $\pi_{4,i} = [D_i \times D_i ] / <D_i,D_i>$, 
where $\{D_1,\cdots, D_{\rho_2}\}$ is  a $\Q$-basis for $ A^2(X)$. Let  $\alpha =\sum_{ 1\le i \le \rho_2} m_i D_i$,with $m_i \in \Q$. Then $\pi^{alg}_4(\alpha) =\alpha$, because $(\pi_{4,i})_*(D_i)=  D_i $.
From the decomposition in (4.1) we get $\pi^{tr}_4 (\alpha)  =\alpha -  \pi^{alg}_4 (\alpha) =0 $ and hence
$$A^2(t(X))= (\pi^{tr}_4)_*A^2(X)=0.$$
\noindent Therefore we are left to show that $A_1(t(X))=A_1(X)_{hom}$. Let $\beta \in A_1(X) =A^3(X)$. From the Chow-K\"unneth decomposition in (4.1) we get
$$\pi^{tr}_4 (\beta) = \beta -  \pi_0(\beta) -\pi_2(\beta) -\pi^{alg}_4(\beta) - \pi_6(\beta) - \pi_8(\beta),$$
where $\pi_0(\beta) = \pi^{alg}_4(\beta) =\pi_8(\beta)=0$. We also have $\pi_2(\beta) = 0$ because $ \pi_2 =(1/3)(\gamma^3 \times \gamma)$. Therefore
$$\pi^{tr}_4(\beta) =\beta - \pi_6(\beta) = \beta -(1/3)(\gamma \times\gamma^3)_*(\beta) = \beta -(1/3) (\beta \cdot \gamma)\gamma^3 \in A^3(X) $$
and hence 
$$(\pi^{tr}_4(\beta)\cdot \gamma) = (\beta \cdot \gamma -  (1/3) (\beta \cdot  \gamma)\gamma^3 ) \cdot \gamma=0$$ because $\gamma^4 =3$.
 Since $\gamma$ is a generator of $A^1(X)$ it follows that  the cycle   $\pi^{tr}_4(\beta)$ is  numerically trivial. Therefore we get
$$A_1(t(X))=\pi^{tr}_4A_1(X)=A_1(X)_{num}=A_1(X)_{hom}$$.\par
\end{proof}
The following Lemma comes from the results in [Has 3, Thm. 3.18] and [GG, Lemma 1].
\begin{lm} Let $f : M \to N$ be a morphism of motives in $\sM_{rat}(\C)$ such that $f_*:  A^i(M) \to A^i(N)$ is an isomorphism for all $i \ge0$. Then $f $ is an isomorphism.\end{lm}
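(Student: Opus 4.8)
The plan is to use Manin's identity principle, i.e.\ the Yoneda lemma in the pseudo-abelian $\Q$-linear category $\sM_{rat}(\C)$: a morphism $f\colon M\to N$ is an isomorphism as soon as
$$f_*\colon \Hom_{\sM_{rat}}(T,M)\longrightarrow \Hom_{\sM_{rat}}(T,N)$$
is bijective for every object $T$, and since every motive is a direct summand of some $h(Y)(n)$ with $Y$ smooth projective it is enough to take $T=h(Y)(n)$. First I would rewrite these Hom groups as Chow groups: writing $M=(X,p,m)$ with $X$ smooth projective, $\Hom_{\sM_{rat}}(h(Y)(n),M)$ is the direct summand cut out by the projector $p$ inside a Chow group $A^{\bullet}(Y\times X)$, and this is canonically a graded piece of $A^{\bullet}\bigl(M\otimes h(Y)^{\vee}\bigr)$, where $h(Y)^{\vee}\simeq h(Y)(d_Y)$ for $Y$ of pure dimension $d_Y$. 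Hence $f$ is an isomorphism precisely when $f\otimes\mathrm{id}_{h(Y)^{\vee}}$ induces an isomorphism on every $A^{i}$, for every smooth projective $Y$; the hypothesis of the lemma is exactly this in the case $Y=\Spec\C$.

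What remains, and where the cited results [Has 3, Thm.~3.18] and [GG, Lemma 1] come in, is the passage from $Y=\Spec\C$ to a general $Y$ --- this is genuinely needed, since the functor $M\mapsto(A^{i}(M))_{i}$ is not faithful on all of $\sM_{rat}(\C)$. I would carry it out using finite dimensionality: if $M$ and $N$ are finite dimensional then, by Kimura--O'Sullivan nilpotence, the kernel of $\End(M)\to\End(H_{Hodge}(M))$ consists of nilpotent correspondences, so it is enough to know that $f$ is an isomorphism after homological realization. For the motives to which the lemma is actually applied --- direct summands of the motive of a cubic fourfold, or of $t_2(S)(1)$ for an associated K3 surface --- the realization is concentrated in a single weight and the only nonvanishing $A^{i}$ sits in codimension $3$ (compare Lemma 4.2, and $A^{j}(t_2(S))=T(S)$ for $j=2$, $0$ otherwise), and one then checks that bijectivity of $f_*$ on that single Chow group already forces bijectivity on the polarized Hodge structure, hence on the whole realization. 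Alternatively, without finite dimensionality one would stratify $Y$ and pass to the generic points of the strata $Y'$, reducing $\Hom(h(Y),M)$ to a filtered colimit of Chow groups $A^{\bullet}(M_{\C(Y')})$, so that the statement follows once the hypothesis is known to be stable under the field extensions $\C\subset\C(Y')$, which is how the cited references phrase it.

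\noindent\textbf{Main obstacle.}
The identification of Hom groups with Chow groups of twists and the final Yoneda step are formal; the one delicate point is the reduction of the twisted statement ($Y$ general) to the untwisted one ($Y=\Spec\C$). This is false for an arbitrary morphism of motives, and the way around it is to exploit that the relevant $M$ and $N$ are direct summands of $h(W)(m)$ with $W$ a smooth projective variety whose Chow groups in the pertinent codimensions are representable --- so that $A^{\bullet}\bigl(M\otimes h(Y)^{\vee}\bigr)$ is controlled by $A^{\bullet}(M)$ together with $\CH^{\bullet}(Y)$ --- or else to trade the Chow groups for the semisimple Hodge realization via finite dimensionality. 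I expect this step, rather than the surrounding formalism, to be where the proof really lives.
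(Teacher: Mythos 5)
Your formal skeleton (Manin's identity principle plus the identification of twisted Hom groups with Chow groups) is fine, and you correctly locate the difficulty in passing from $A^i$ over $\C$ to the twisted/relative Chow groups. But neither of the two ways you propose to cross that gap works as written, so the proposal does not prove the lemma. Your primary route assumes Kimura finite dimensionality of $M$ and $N$: this is not a hypothesis of the lemma, and in the paper's main application (Theorem 4.6, which feeds Corollary 4.9) finite dimensionality of $t(X)$ is exactly what one wants to deduce \emph{after} applying the lemma, so invoking it here is both unavailable and circular. Moreover your claim that bijectivity of $f_*$ on the single nonvanishing Chow group ``already forces bijectivity on the polarized Hodge structure'' is not a formal check: it is a Mumford/Bloch--Srinivas type implication which itself requires a decomposition-of-the-diagonal argument over a universal domain, i.e.\ precisely the nontrivial content at stake. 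Your fallback route (stratify $Y$, pass to the generic points, work with $A^\bullet(M_{\C(Y')})$) points in the right direction, but the decisive step --- that the hypothesis over $\C$ stays valid over the extensions $\C \subset \C(Y')$ --- is exactly what must be proved: injectivity does follow from injectivity of base change on Chow groups with $\Q$-coefficients, but surjectivity over $\C(Y')$ requires a spreading/specialization (or norm) argument exploiting that $\C$ is a universal domain, and you defer this entirely to ``the cited references''.

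For comparison, the paper's proof avoids the twisted Yoneda bookkeeping altogether: choosing a finitely generated field of definition $k$ of $f$, so that $\C$ is a universal domain over $k$, it quotes the cited theorem to conclude that surjectivity of $f_*$ on all $A^i$ already yields a right inverse $g$ with $f \circ g = \mathrm{id}_N$; then $M \simeq N \oplus T$ for a direct summand $T$, the hypothesis forces $A^i(T)=0$ for all $i$, and [GG, Lemma 1] (a motive over a universal domain with vanishing Chow groups is zero) gives $T=0$, hence $f$ is an isomorphism. If you want a self-contained argument, the statement you need to prove or quote is this ``surjective on Chow groups over a universal domain implies split surjective'' result, together with the vanishing lemma --- not finite dimensionality.
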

\begin{proof} Let $M=(X,p,m)$  and  $N =(Y,q,n)$ and let  $k \subset \C$ be a field of definition of  $f$, which is finitely generated . Then $\Omega=\C$ is a universal domain over $k$. By [Has 3, Thm. 3.18] the map $f$ has a right inverse, because the map $f_*: A^i(M)\to A^i(N)$ is surjective.  Let  $g : N \to M$ be such that $f \circ g =id_N$.  Then $g$ has an image $T$ which  is a direct factor of $M$ and hence $f$ induces an isomorphism  of motives in $\sM_{rat}(\C)$ 
$$f :  M \simeq N \oplus T$$
From the isomorphism  $A^1(M)\simeq A^i(N$, for all  $i \ge 0$, we get $A^i(T)=0$ and hence $T=0$, by [GG, Lemma1].\end{proof} 
Let $X$ be a cubic fourfold and let $F(X)=F $ be its Fano variety of lines , which is a smooth fourfold. Let
\begin {equation}\CD  P@>{q}>> X \\
@V{p}VV   @.         \\
F
\endCD \end{equation}
be the incidence diagram, where $P \subset X \times F$ is the universal line over $X$.  let   $ p_*q^* : H^4(X,\Q) \to H^2(F ,\Q)$ be the Abel-Jacobi map.  Let $\alpha_1,\cdots ,\alpha_{23}$ be a $Q$-basis of the vector space $H^4(X,\Q)$. Then, by a result of Beauville -Donagi, $\tilde \alpha_i =p_*q^*(\alpha_i)$ form a basis of $H^2(F,\Q)$. Let $q_F$ be the Beauville-Bogomolov bilinear form on $H^2(F,\Q)$ defined as
$$q_F(\tilde \alpha_i,\tilde \alpha_j) =<\alpha_i,\gamma^2>\cdot <\alpha_j,\gamma^2> -  <\alpha_i,\alpha_j>.$$
where $<  , >$ is the symmetric bilinear form on $H^4(X,\Q)$. Suppose that $F \simeq S^{[2]}$, with $S$ a K3 surface.Then the homomorphism $H^2(S,\Q) \to H^2(F,\Q)$ induces an orthogonal direct sum decomposition with respect to the Beauville-Bogomolov form
\begin {equation} H^2(F,\Q) \simeq H^2(S,\Q) \oplus \Q \delta,\end{equation}
with $q_F(\delta,\delta)=-2$ and $q_F$ restricted to $H^2(S,\Q)$ is the intersection form, see [SV,(56)]. \par  
\noindent  A cubic fourfold $X$ is special if contains a surface $\Sigma$ such that its cohomological class $\sigma$ in $H^4(X,\Q)$ is not homologous to any multiple of  $\gamma^2$. Therefore $\rho_2(X) >1$. The discriminant $d$ is defined as the discriminant  of the intersection form $< ,>_D$ on the subspace $D$ of $H^4(X,\Q)$ generated by $\sigma$ and $\gamma^2$. By a result of B.Hasset in [Has 1, 1.0.3],  if $X$ is a generic special cubic fourfold with discriminant of the form $d =2(n^2 +n+1)$, where $n$ is an integer $\ge2$, then the Fano variety of $X$ is isomorphic to $S^{[2]}$, with $S$ a K3 surface associated to X.  
Such special $X$ include cubic fourfolds containing a plane, or  a  cubic scroll, or a Veronese surface, see [Has 1, 4.1]. Special cubic fourfolds of discriminant $d$ form a nonempty irreducible divisor $\sC_d$ in the moduli space $\sC$ of cubic four folds if and only if  $d>0$ and $d\equiv 0,2 (6)$. If $d$ is not divisible by 4,9 or any odd prime $p \equiv 2 (3)$ then $X \in \sC_d$ if and only if the transcendental lattice  $\sT_X$ is Hodge isometric  to $\sT_S(-1)$ for some K3 surface, see [Add].\par
\noindent If  $X$ is special and $F(X) \simeq S^{[2]}$, with $S$ a K3 surface, then by (4.5),  $H^2_{tr}(F,\Q)\simeq H^2_{tr}(S,\Q)$, where $\dim H^2_{tr}(F,\Q) = \dim H^4_{tr}(X,\Q) =23 -\rho_2(X)$. Here $\rho_2(X) \ge2$ and hence we get
$$ \dim H^2_{tr}(F,\Q) =\dim H^2_{tr}(S,\Q) = 22 -\rho(S) = 23 -\rho_2(X) \le 21,$$
where $\rho(S)$ is the rank of $NS(S)$. \par

 \begin{thm}  Let $X$ be  a cubic fourfold and let $F =F(X)$ be the Fano variety of lines. Suppose that   $F \simeq S^{[2]}$, with  $S$ a K3 surface.  Let  $p$ and $q$ be the morphisms in 
 the incidence diagram  (4.4). Then $q$ induces a map of motives $\bar q : t_2(S)(1) \to t(X)$ in $\sM_{rat}(\C)$ which is  an isomorphism.  
\end{thm}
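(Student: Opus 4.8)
The plan is to produce $\bar q$ from the incidence diagram (4.4) together with the isomorphism $F\simeq S^{[2]}$, and then to show it is an isomorphism by means of Lemma 4.3, i.e. by checking that it induces isomorphisms on every Chow group of the two motives. For the construction: the isomorphism $F\simeq S^{[2]}$ supplies a standard correspondence $\Gamma\in A^*(S\times F)$ — coming from the universal length-two subscheme of $S$, equivalently from the known decomposition of $h(S^{[2]})$ in terms of $h(S)$ — which on cohomology induces the inclusion $H^2(S,\Q)\hookrightarrow H^2(F,\Q)$ of (4.5) and which, after composing with $\pi^{tr}_2$ on the $S$-side and with the projector onto the transcendental summand $t_2(F)$ of $h_2(F)$, restricts to an isomorphism $t_2(S)\iso t_2(F)$ (both motives have the same realization $H^2_{tr}(S,\Q)=H^2_{tr}(F,\Q)$, by (4.5)). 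On the other side, the universal line $P\subset X\times F$ with its maps $p,q$ provides, after the appropriate Tate twist and sandwiching with $\pi^{tr}_4$ and the projector onto $t_2(F)$, a morphism $t_2(F)(1)\to t(X)$ realizing the inverse of the Beauville--Donagi isomorphism $p_*q^*\colon H^4_{tr}(X,\Q)\iso H^2_{tr}(F,\Q)(-1)$. Defining $\bar q$ to be the composite $t_2(S)(1)\iso t_2(F)(1)\to t(X)$, this is the morphism induced by $q$ through the incidence correspondence; nothing about its nontriviality needs to be checked at this stage, since that will follow from Lemma 4.3.

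By Lemma 4.3 it then suffices to prove that $\bar q_*\colon A^i(t_2(S)(1))\to A^i(t(X))$ is bijective for all $i\ge 0$. By the properties of the transcendental motive of a surface recalled in Section 3 (from [KMP, 7.2.2]), $A_j(t_2(S))=0$ for $j\ne 0$ and $A_0(t_2(S))=T(S)$, the Albanese kernel, which for the K3 surface $S$ equals $A_0(S)_{hom}$; hence, after the twist, $t_2(S)(1)$ has a single nonzero Chow group, namely $T(S)$, in codimension $3$. On the other side, Lemma 4.2 gives $A^i(t(X))=0$ for $i\ne 3$ and $A^3(t(X))=A_1(X)_{hom}$. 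Thus all the maps $\bar q_*$ are trivially isomorphisms apart from the one common nonzero degree, and the theorem is reduced to showing that
\[
\bar q_*\colon\quad T(S)=A_0(S)_{hom}\ \longrightarrow\ A_1(X)_{hom}
\]
is an isomorphism.

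This last statement is where the real work is, and I expect it to be the main obstacle. I would prove it using the detailed analysis of $A^*(F(X))$ and of its relation to $A^*(X)$ via the universal line $P$, carried out by Shen and Vial in [SV]: on the one hand $A_0$ is a birational invariant, so $A_0(F)=A_0(S^{[2]})=\operatorname{Sym}^2 A_0(S)$, and the correspondence $\Gamma$ singles out a copy of $T(S)$ inside it; on the other hand $q_*p^*\colon A_0(F)\to A_1(X)$ sends the class of a line $\ell\in F$ to $[\ell]\in A_1(X)$, and, combining the composition identities among $[P]$, its transpose and the Fano incidence proved in [SV] with $A_1(X)_{hom}=A_1(X)_{alg}$ from [TZ] and the fact that $A_1(X)$ is generated by lines (rational connectedness of $X$), one checks that the resulting composite $\bar q_*$ maps $T(S)$ bijectively onto $A_1(X)_{hom}$, the complementary summand $\operatorname{Sym}^2 T(S)$ of $A_0(F)$ being annihilated. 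An equivalent route, bypassing Lemma 4.3, is to exhibit a candidate inverse built from the transpose of $P$ and of $\Gamma$, and to verify — again via the composition formulas of [SV] for a cubic fourfold and its Fano variety — that $\bar q$ and this candidate become mutually inverse once squeezed between the transcendental projectors; the essential input is the same. In either form, the two delicate ingredients are (i) checking that the correspondences chosen genuinely compose to the map induced by $q$, and (ii) the Chow-theoretic isomorphism $T(S)\cong A_1(X)_{hom}$, which is where the geometry of [SV] is really needed.
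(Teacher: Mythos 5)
Your overall strategy coincides with the paper's: you build $\bar q$ from the de Cataldo--Migliorini summand $h(S)(1)\subset h(F)$ and the universal line $P$ (the paper uses the $\P^1$-bundle splitting $h(P)\simeq h(F)\oplus h(F)(1)$ and then sandwiches with $\pi^{tr}_2$ and $\pi^{tr}_4$, rather than passing through a ``transcendental motive of $F$'' -- a detour you don't need and whose intermediate claim $t_2(S)\iso t_2(F)$ is not justified at the Chow level by equality of Hodge realizations alone, though this does not hurt you since only the composite enters Lemma 4.3); and you reduce, exactly as the paper does via Lemma 4.2 and Lemma 4.3, to the single statement that $\bar q_*\colon T(S)=A_0(S)_0\to A_1(X)_{hom}$ is bijective.

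The genuine gap is that this last bijectivity -- which you yourself flag as ``where the real work is'' -- is asserted (``one checks that\dots'') rather than proved, and it is precisely the content of the theorem. The paper establishes it by a specific chain of inputs which your sketch does not pin down: (a) $\Psi_0=p^*q_*\colon A_0(F)\to A_1(X)$ is surjective by [TZ, 6.1(i)]; (b) in the Shen--Vial decomposition $A_0(F)=A_0(F)_0\oplus A_0(F)_2\oplus A_0(F)_4$ one has $\ker\Psi_0=F^4A_0(F)=\sA_{hom}\cdot\sA_{hom}=A_0(F)_4$, by [SV, 20.2(iii), 20.3, 20.5], so $\Psi_0$ induces an isomorphism $A_0(F)_2\simeq A_1(X)_{hom}$ (using also $A_1(X)_{hom}=A_1(X)_{num}$); and (c) $A_0(S)_0\simeq A_0(F)_2$ via $x\mapsto[c_S,x]$, whose injectivity is not formal but rests on Banerjee's theorem [Ba] that $A_0(\tilde S)\to A_0(S^{[2]})$ is injective for the embedded copy $\tilde S$ of $S$. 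Your phrase ``the complementary summand $\operatorname{Sym}^2T(S)$ being annihilated'' is the right heuristic for (b), but without the kernel computation and without (c) the argument is a plan, not a proof; also your point (i), that the correspondence defining $\bar q$ really induces this $\Psi_0$-isomorphism on $A^3$, is a compatibility that has to be checked against the chosen splittings and is left unverified. Filling in (a)--(c) as above would complete your argument along the same lines as the paper.
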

\begin{proof}  In (4.4) $P$ is a $\P^1$-bundle over $F$ (see [Vois 4, 3.24] ) and hence
$$ h(P) \simeq h(F) \oplus h(F)(1)$$
 By the results in [deC-M, 6.2.1] $h(S)(1)$ is a direct summand of $h(S^{[2]})=h(F)$. Therefore we get a map of motives
\begin {equation} \CD   \bar q : h(S)(1)@>{f}>>h(F)@>{g}>>h(P)@>{q_*}>>h(X) \\ \endCD\end{equation}
where $f$  and $g$ are  the inclusions induced by the splittings of $h(F)$ and $h(P)$.  Let $t_2(S)$ be the transcendental motive of the surface $S$. By composing with the inclusion $t_2(S)(1) \to h(S)(1)$ and the surjection $h(X) \to t(X)$ we get from (4.7) a map of motives  in  $\sM_{\rat }(\C)$,
$$ \bar q :  t_2(S)(1) \to t(X)$$ 
The morphisms  $p$ and $q$ yield a homomorphism  $\Psi_0=p^*q_*: A^i(F) \to A^{i-1}(X)$ 
such that 
$$ \Psi_0 : A^4(F) =A_0(F) \to A^3(X)=A_1(X),$$
is surjective, see [TZ, 6.1(i)].  By [SV,15.6] the group $A_0(F)$ decomposes as follows
$$A_0(F) =A_0(F)_0 +A_0(F)_2 +A_0(F)_4$$
where $A_0(F)_0 =\Q c_F$, with  $c_F \in A_0(F)$ a special degree 1 cycle. For two distinct points $x,y \in S$ let's denote by $[x,y] \in F =S^{[2]}$ the point of $F$ that corresponds to the subscheme $\{x,y\} \subset S$. If  $x=y$  then $[x,x]$ denotes the element in $A_0(F)$ represented by any point corresponding to a non reduced subscheme of length 2 on $S$ supported on $x$. With these notations $c_F$ is represented by the point $[c_S, c_S]  \in F$, where $c_S$  is the Beauville-Voisin cycle in $A_0(S)$ such that $c_2(S) = 24  c_S$. We also have (see [SV,15.6]) 
$$A_0(F)_2 = <[c_S,x]- [c_S,y]>,$$
We claim  that the map $\phi : A_0(S) \to A_0(S^{[2]})=A_0(F)$ sending $[x]$ to $[c_S, x]$ is injective and hence $A_0(S)_0$ is isomorphic to $A_0(F)_2 \subset A_0(F)_{hom}$.\par
 \noindent The variety $S^{[2]}$ is the blow-up of the symmetric product $S^{(2)}$ along $S$. Let $\tilde S$ be the inverse image of $S$ in $S^{[2]}$. Then $\tilde S$ is the image of the closed embedding 
$s \to [c_S, s]$. By a result proved in [Ba] the induced map of 0-cycles $A_0( \tilde S) \to A_0(S^{[2]})$ is injective. Therefore the map $\phi$  is injective .\par
\noindent By [SV, 20.3 ] we have 
$$A_0(F)_4 =A^2(F)_2 \cdot A^2(F)_2$$ 
where $A^2(F)_2) =\sA_{hom}$ and  $\sA$  is generated by the cycles of the form $S_x $ with $x \in S$, see [ SV, 15.2]. Here, for a point $x \in S$,  the variety $S_x  \subset F$ is isomorphic to the blow up of $S$ at the point $X$ and $S_x \cdot S_y =[x,y]$. Then, using the results in [SV, 20.2 (iii)] and [SV, 20.5], we get
 $$\ker (\Psi _0 : A^4(F)  \to A^3(X))= F^4A_0(F)= \sA_{hom}\cdot \sA_{hom}=A_0(F)_4.$$
 Therefore $\Psi_0$ induces  an exact sequence 
 $$ A_0(F)_4 \to   A_0(F)_{hom} \to A_1(X)_{hom} \to 0$$
 which yields an isomorphism between $A_0(S)_0=A_0(F)_2\subset A_0(F)_{hom}$ and $A_1(X)_{hom}$.\par
 \noindent  We have $A^i(t_2(S)) =0$ for $i \ne 2$ and $A^2(t_2(S))= A_0(t_2(S))=A_0(S)_0$. Therefore
 we get an isomorphism $A^3(t_2(S)(1))=A^2(t_2(S)) =A_0(S)_{hom} \simeq A_1(X)_{hom}$. From Lemma 4.2 we get $A^i(t(X) =0$, for $ i\ne 3$ and $A^3(t(X) =A_1(X)_{hom}$. \par
 \noindent Therefore the map of motives $\bar q : M \to N$ in $\sM_{rat}(\C)$, where $M=t_2(S)(1)$ and $N =t(X)$, induces isomorphisms $A^i(M) \simeq A^i(N)$ for all $i \ge0$. Then, from Lemma 4.3 , 
 $\bar q$ is an isomorphism.
  \end{proof}
\begin{rk} Let $X$ be a cubic fourfold such that there exist K3 surfaces $S_1$ and $S_2$ and  isomorphisms $r_1 : F(X) \to S^{[2]}_1$ and $r_2: F(X) \to S^{[2]}_2$ with  $r_1^*\delta_1 \ne r^*_2\delta_2$, as in 
[Has 1, 6.2.1]. Then by Theorem 4.6 $t_2(S_1) \simeq t_2(S_2)$, and hence the motives $h(S_1)$ and $h(S_2)$ are isomorphic.\end{rk}
\begin{cor} Let $X$ be  a cubic fourfold and let $F =F(X)$ be the Fano variety of lines. Suppose that   $F \simeq S^{[2]}$, with  $S$ a K3 surface. Then $h(X)$ is finite dimensional   if and only if   $h(S)$ is finite dimensional in which case the motive $t(X)$ is indecomposable.
 \end{cor}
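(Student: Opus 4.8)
The plan is to deduce both assertions from Theorem 4.6, which furnishes an isomorphism $t(X)\simeq t_2(S)(1)$ in $\sM_{rat}(\C)$, together with the elementary observation that the Tate twist $M\mapsto M(1)$ is an autoequivalence of $\sM_{rat}(\C)$ and hence preserves both finite dimensionality and indecomposability of a motive. So everything reduces to statements about $t_2(S)$.

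For the equivalence of finite dimensionality, I would argue on the Chow--K\"unneth decompositions. In (4.1) every summand of $h(X)$ other than $t(X)$ is finite dimensional (the motives $h_2(X)$, $h_6(X)$ by [Ki, 8.4.5], and $\un$, $\L^{\oplus\rho_2}$, $\L^4$ trivially), so $h(X)$ is finite dimensional if and only if $t(X)$ is. Since $S$ is a K3 surface one has $H^1(S,\Q)=H^3(S,\Q)=0$, whence $h_1(S)=h_3(S)=0$ and $h(S)=\un\oplus\L^{\oplus\rho(S)}\oplus t_2(S)\oplus\L^2$; thus $h(S)$ is finite dimensional if and only if $t_2(S)$ is. As $t(X)\simeq t_2(S)(1)$, the motives $t(X)$ and $t_2(S)$ are finite dimensional simultaneously, and chaining the two equivalences gives the first claim.

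Now assume $h(S)$, equivalently $h(X)$, is finite dimensional; it suffices to show $t_2(S)$ is indecomposable, since then $t(X)\simeq t_2(S)(1)$ is too. By the isomorphism (3.1) this amounts to showing that $\End_{\sM_{rat}}(t_2(S))\simeq A^2(S\times S)/\sJ(S)$ has no idempotents other than $0$ and the identity $\pi^{tr}_2$. Because $t_2(S)$ is finite dimensional, Kimura's nilpotence theorem ([Ki]) shows that the kernel of the surjection $\End_{\sM_{rat}}(t_2(S))\to\End_{\sM_{hom}}(t^{hom}_2(S))$ is a nil ideal, so idempotents of the two rings correspond bijectively under reduction. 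Applying the Hodge realization $H_{Hodge}$, which sends $t^{hom}_2(S)$ to the polarized Hodge structure $H^2_{tr}(S,\Q)$, a nontrivial idempotent $e$ of $\End_{\sM_{hom}}(t^{hom}_2(S))$ would split $t^{hom}_2(S)$ into two nonzero homological submotives — nonzero because a homological motive with vanishing realization is zero, and this applies both to $e$ and to $\pi^{tr}_{2,hom}-e$ — and hence would split $H^2_{tr}(S,\Q)$ into two nonzero polarized sub-Hodge structures, yielding a nontrivial idempotent of $\End_{\mathrm{Hdg}}(H^2_{tr}(S,\Q))$. But by Zarhin's theorem on Hodge groups of K3 surfaces, $\End_{\mathrm{Hdg}}(H^2_{tr}(S,\Q))$ is a field (totally real or CM) and so contains no nontrivial idempotent; this contradiction shows $t_2(S)$, and therefore $t(X)$, is indecomposable.

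I do not expect any single step to be a serious obstacle: the argument rests only on the two standard properties of finite-dimensional motives just used (nilpotence of $\ker(\End_{\sM_{rat}}\to\End_{\sM_{hom}})$, so that idempotents lift, and the fact that a nonzero homological motive has nonzero realization) and on Zarhin's structure theorem, and it notably does not require the Hodge conjecture for $S\times S$. The one point to be careful with is checking that a nontrivial homological idempotent maps to a genuinely nontrivial Hodge idempotent — i.e. that its realization is neither $0$ nor the identity — which is exactly the observation that $e$ and $\pi^{tr}_{2,hom}-e$ are both nonzero in $\sM_{hom}(\C)$.
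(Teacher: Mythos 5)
Your proof is correct, and its first half is exactly the paper's argument: reduce everything to the isomorphism $t(X)\simeq t_2(S)(1)$ of Theorem 4.6, use the decomposition (4.1), in which every summand other than $t(X)$ is finite dimensional, and the decomposition $h(S)=\un\oplus\L^{\oplus\rho(S)}\oplus t_2(S)\oplus\L^2$ for a K3 surface, so that finite dimensionality of $h(X)$, $t(X)$, $t_2(S)$ and $h(S)$ are all equivalent. Where you genuinely diverge is the indecomposability claim: the paper settles it in one line by citing [Vois 1, Cor.~3.10] (finite dimensionality of $h(S)$ for a K3 forces $t_2(S)$ to be indecomposable), whereas you prove it directly — Kimura nilpotence of the kernel of $\End_{\sM_{rat}}(t_2(S))\to\End_{\sM_{hom}}(t^{hom}_2(S))$, so a nontrivial idempotent would survive homologically; faithfulness of the Hodge realization on these endomorphisms; and the absence of nontrivial idempotents in $\End_{HS}(H^2_{tr}(S,\Q))$ by Zarhin's theorem. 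That unpacking is sound, and it is the same mechanism the paper itself deploys in Lemma 3.2 (in the opposite direction) and in Proposition 4.12; it also makes explicit that no Hodge-conjecture hypothesis is needed at this point, and that the Tate twist, being tensoring with the invertible even motive $\L$, transports finite dimensionality and indecomposability back and forth — a point the paper uses tacitly. One small remark: Zarhin's field theorem is heavier than necessary; irreducibility of the transcendental Hodge structure of a surface with $p_g=1$ (recorded in Section 2 of the paper) already excludes nontrivial Hodge idempotents, by Lefschetz $(1,1)$ together with the nondegeneracy of the polarization on sub-Hodge structures, so either reference does the job.
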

 \begin{proof} If $h(X)$ is finite dimensional then also $t(X)$ is finite dimensional and hence, by Theorem 4.6, $t_2(S)$ is finite dimensional . Therefore $h(S)$ is finite dimensional. Conversely, if $h(S)$ is finite dimensional then also $t_2(S)$ and $t(X)$ are finite dimensional, by Theorem 4.6.  From the Chow-K\"unneth decomposition in (4.1) we get that $h(X)$ is finite dimensional.   If $h(S)$ is finite dimensional  then the motive $t_2(S)$ is indecomposable, see [Vois 1, Cor. 3.10], and hence also $t(X)$ is indecomposable.  \end{proof}
\begin {rk} If the motive $h(X)$ is finite dimensional then $t(X)$ is, up to isomorphisms in $\sM_{rat}(\C)$, independent of the Chow-K\"unneth decomposition  $h(X) =\sum_i h_i(X)$ in (4.1). If   $h(X) =\sum_i \tilde h_i(X)$ is another Chow-K\"unneth decomposition, with $\tilde h_i(X) =(X,\tilde \pi_i)$, then, by [KMP, 7.6.9], there is an isomorphism $\tilde h_i(X) \simeq h_i(X)$ and 
$\tilde  \pi_i =(1+Z)\circ \pi_i \circ (1+Z)^{-1}$, where $Z \in A^4(X \times X)_{hom}$ is a nilpotent correspondence. In particular 
$$\tilde \pi_4= (1+Z)\circ \pi_4\circ (1+Z)^{-1}=(1+Z)\circ (\pi^{alg}_4+\pi^{tr}_4) \circ (1+Z)^{-1}$$
and hence $\tilde h_4(X)$ contains as a direct summand a submotive $\tilde t(X) =(X, (1+Z)\circ \pi^{tr}_4 \circ (1+Z)^{-1})$ isomorphic to $t(X)$.\par
\noindent However, differently from the case of the transcendental motive $t_2(S)$ of a surface $S$, the motive $t(X)$ is not a birational invariant. In fact $t(X)\ne0$ for a rational cubic fourfold $X$ such that $F(X) \simeq S^{[2]}$, with $S$ a K3 surface
\end{rk}
 \begin {ex} (1) A family  $\sF$ of cubic fourfolds  $X$ with a finite dimensional motive is given by the equations
$$ u^3 +v^3 +f(x,y,z,t)=0$$
where $(x,y,z,t,u,v)$ are coordinates in $\P^5$,  $f$ is of degree 3 and defines a smooth surface in $\P^3$, see [Lat 1, Rk. 18]. For each fourfold $X \in \sF$  there is an associated  K3 surface $S$ (a double cover of $\P^2$ ramified along a sextic) and a correspondence $\Gamma \in A^3(S \times X) = \End_{\sM_{rat}}(h(S)(1),h(X))$ inducing an isomorphism $ \Gamma_*: A_0(S)_0\simeq A_1(X)_{hom}=A_1(X)_{alg}$, see [Vois 3, 4.2]. By Lemma 4.2 the correspondence $\Gamma$ induces a map of motives $\gamma : t_2(S)(1) \to t(X)$ such that $\gamma_* : A^i(t_2(S)(1)) \to A^i(t(X)$ is an isomorphism for all $i\ge0$.Therefore, by Lemma 4.3, $\gamma$ is an isomorphism of motives. The motives of all four folds $X$ in $\sF$ are finite dimensional and of abelian type, see [Lat 1,  Cor.17(iii)],and hence also the motives $t_2(S)$ are finite dimensional and of abelian type. It follows that  all the associated K3 surfaces $S$ have a finite dimensional motive of abelian type.\par
\noindent (2) Examples of rational cubic fourfolds $X$ such that the transcendental motive $t(X)$ is isomorphic to $t_2(S)(1)$, for a given K3 surface $S$ can be constructed as in [Has 3, Sect.5]. Let $A^1(S)$ be generated by two ample divisors $h_1$ and $h_2$ such that $<h_1, h_1> = <h_2, h_2> =  2$ and $<h_1,h_2>= <h_2,h_1> = 2k +1$, where $k =2,3$ and $<,>$ is the intersection form on $H^2(S,\Z)$. Then there is diagram

$$ \CD  Y@>{\phi}>> X \subset \P^5 \\
       @V{\pi}VV            @.            \\
       \P^2 \times \P^2    \\ \endCD $$
      
\noindent where  $Y$ is smooth,  $\pi$ is the blow up of a surface $S'$, the image of $(s_1,s_2 ) : S \to \P^2 \times \P^2 $ where   $s_i : S \to  \P^2$ is the branched  double cover given by $\sO_{S}(h_i)$. The map $\phi$ is obtained   by blowing up a plane $P$ and surface  $T$ on $X$. If $k=2$ then  also  $T$  is a  plane, while $T$ is a Veronese surface in the case $k=3$.  In both cases we have $t_2(P)=t_2(T)=0$. The motive $h(Y)$ splits as  $h(S')(1) \oplus h(\P^2 \times \P^2)$, where  $\P^2 \times \P^2$ has no transcendental motive, and $h(X)$ is a direct summand of $h(Y)$. Therefore the transcendental motive $t(X)$ coincides with
$t_2(S')(1)$ and $t_2(S') =t_2(S)$, because $t_2(-)$ is a birational invariant.\par
(3) Let  $\sC _{14}$ be the irreducible divisor of special  cubic fourfolds of degree14, for which the special surface is a smooth quartic rational normal scroll. By the results in [BR] all the fourfolds $X$ in $\sC_{14}$are rational. Moreover if $X \in (\sC_{14} - \sC_8) $, then $F(X) \simeq S^{[2]}$, where $S$ is the K3 surface of degree 14 and genus 8 parametrizing smooth quartic rational normal scrolls contained in $X$. By Theorem 4.6 there is an isomorphism between the transcendental motives $t(X) \simeq t_2(S) (1)$.\end{ex}
According to Corollary 4.9 if $X$ is a special cubic fourfold with $F(X) \simeq S^{[2]}$, and $h(X)$ is  finite dimensional, then $t(X)$ is indecomposable. The following proposition shows that, if $X$ is very general  and $h(X)$ is  finite dimensional, then  $t(X)$ is indecomposable.  
\begin {prop} Let $X$ be a very general cubic fourfold. If  $h(X)$ is finite dimensional  the transcendental motive $t(X)$ is indecomposable.\end{prop}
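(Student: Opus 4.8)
The plan is to adapt to $X$ the nilpotence argument that shows the transcendental motive of a K3 surface with finite dimensional motive is indecomposable, which in the setting of Corollary~4.9 entered through the isomorphism $t(X)\simeq t_2(S)(1)$. Since a very general cubic fourfold has no associated K3 surface, the role of ``$t_2(S)$ is indecomposable'' will be played instead by the irreducibility of the transcendental polarized Hodge structure $\sT_X$ recalled in Section~2, combined with the finite dimensionality hypothesis.

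First I would note that $t(X)$ is finite dimensional: in the Chow-K\"unneth decomposition (4.1) the summands $\un$, $h_2(X)$, $\L^{\oplus \rho_2}$, $h_6(X)$ and $\L^4$ are finite dimensional, so the finite dimensionality of $h(X)$ is equivalent to that of $t(X)=h^{tr}_4(X)$, which, being of weight $4$, is evenly finite dimensional. Next I would record that the Hodge realization of $t(X)$ is concentrated in degree $4$, where it equals $H^4_{tr}(X,\Q)$: since $X$ is very general one has $\rho_2(X)=1$, so $\pi^{alg}_4=(1/3)[\gamma^2\times\gamma^2]$ has image $\Q\gamma^2$ on $H^*(X,\Q)$, and therefore $\pi^{tr}_4=\pi_4-\pi^{alg}_4$ has image the orthogonal complement of $\Q\gamma^2$ in $H^4(X,\Q)$, namely $H^4_{tr}(X,\Q)=T_X\otimes\Q$. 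Finally, as recalled in Section~2, for a very general $X$ the polarized $\Q$-Hodge structure $\sT_X$ is irreducible; one can also see this directly from $h^{3,1}(X)=1$ and the integral Hodge conjecture for $X$, since any proper nonzero $\Q$-Hodge substructure of $H^4_{tr}(X,\Q)$ would have a complement of pure type $(2,2)$, hence consisting of algebraic classes, hence contained in $\bar A^2(X)\cap(T_X\otimes\Q)=0$.

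The argument then proceeds by contradiction. Suppose $t(X)=M_1\oplus M_2$ with $M_i=(X,p_i)$ nonzero and $\pi^{tr}_4=p_1+p_2$ a sum of orthogonal idempotents in $A^4(X\times X)$. Applying the Hodge realization $H_{Hodge}\colon\sM_{rat}(\C)\to PHS_{\Q}$, which factors through $\sM_{hom}(\C)$ and takes values in the semisimple abelian category $PHS_{\Q}$, gives a splitting $H^4_{tr}(X,\Q)=H^4(M_1)\oplus H^4(M_2)$ into polarized sub-Hodge structures; by irreducibility one of them vanishes, say $H^*(M_2)=0$. Thus $p_2$ acts as zero on $H^*(X,\Q)$, i.e. $p_2$ is homologically, a fortiori numerically, trivial. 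Being a direct summand of the finite dimensional motive $t(X)$, the motive $M_2$ is finite dimensional, so by the Kimura nilpotence theorem the numerically trivial idempotent $p_2\in\End_{\sM_{rat}}(M_2)$ is nilpotent, whence $p_2=0$, contradicting $M_2\neq0$. Therefore $t(X)$ is indecomposable.

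The only step I expect to require genuine care is the irreducibility of $\sT_X$ --- which is exactly where the hypothesis that $X$ is very general enters --- the rest being the standard principle that a finite dimensional motive with simple Hodge realization is indecomposable. One should moreover recall, as in Remark~4.10, that under the finite dimensionality assumption the motive $t(X)$ is, up to isomorphism in $\sM_{rat}(\C)$, independent of the chosen Chow-K\"unneth decomposition, so that the statement of the Proposition is unambiguous.
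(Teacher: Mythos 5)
Your argument is correct and follows essentially the same route as the paper: identify the Hodge realization of $t(X)$ with the transcendental/primitive Hodge structure, use that for a very general $X$ this structure is simple, and then lift indecomposability from the homological level to $\sM_{rat}(\C)$ via finite dimensionality. The only differences are minor: the paper invokes Voisin's computation $\End_{HS}(H^4(X,\Q)_{prim})=\Q[\mathrm{id}]$ where you prove irreducibility of $\sT_X\otimes\Q$ directly from $h^{3,1}=1$ and the Hodge conjecture for $H^4(X)$, and you spell out the descent step explicitly through Kimura nilpotence of the homologically trivial idempotent, which the paper only asserts.
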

\begin {proof} Let the primitive motive $h(X)_{prim}=( X,\pi_{prim},0)$ be defined as in [Ki, 8.4], where
$$ \pi_{prim} = \Delta_X - (1/3)\sum_{0 \le i \le 4}( \gamma^ {4-i} \times \gamma^i).$$
and
$$ H^*(h(X)_{prim})= H^4(X,\Q)_{prim} = P(X)_{\Q}$$
\noindent If $X$ is very general then  $\rho_2(X)=1$ and $ A^2(X)$ is generated by the class $\gamma^2$. Therefore in the Chow-K\"unneth decomposition of $h(X)$ in (4.1)  we have
$h(X)_{prim} = h^{tr}_4(X) =t(X)$ and
$$h_4(X) =h^{alg}_4(X) +h^{tr}_4(X) \simeq \L \oplus h(X)_{prim}.$$
If  $X$ is very general, then  $\End_{HS} (H^4(X,\Q)_{prim} )=\Q[id]$, see [Vois 2, Lemma 5.1]. Let $\sM_{hom}(\C)$ be the category of homological motives and let $\tilde \sM_{hom}(\C)$ be the subcategory generated by the motives of all smooth projective varieties  $V$ such that the K\"unneth components of the diagonal in $H^*(V \times V)$ are algebraic.  The Hodge realization functor
$$H_{Hodge} : \sM_{rat}(\C) \to HS_{\Q}$$
to the Tannakian category of $\Q$-Hodge structures induces a faithful functor $\tilde \sM_{hom}(\C) \to HS_{\Q}$. Let $\bar h(X)= h^{hom}(X) \in \tilde \sM_{hom}(\C)$ : then
$\End_{\sM_{hom}}(\bar h(X)_{prim}) \simeq \Q[id]$ and hence
$$\End_{\sM_{hom}}(\bar h^{tr}_4((X)) \simeq  \End_{\sM_{hom}}(\bar h(X)_{prim}) \simeq \Q[id]$$
 If $h(X)$ is finite dimensional then the indecomposability of $\End_{\sM_{hom}}(\bar h^{tr}_4((X))$ in $\sM_{hom}(\C)$ implies the indecomposability in $\sM_{rat}(\C)$. Therefore  
 $$\End_{\sM_{rat}}(t(X)) \simeq \End_{\sM_{rat}}(h(X)_{prim}) \simeq \Q[id]$$
and the transcendental motive of $X$ is  indecomposable.\par
\end{proof}

  \end{document}